\documentclass[12pt, twoside]{article}
\usepackage{amsmath,amsthm,amssymb}
\usepackage{times}
\usepackage{enumerate}
\usepackage{bm}
\usepackage[all]{xy}
\usepackage{mathrsfs}
\usepackage{amscd}
\usepackage{color}
\def\titlerunning#1{\gdef\titrun{#1}}
\makeatletter
\def\author#1{\gdef\autrun{\def\and{\unskip, }#1}\gdef\@author{#1}}
\def\address#1{{\def\and{\\\hspace*{18pt}}\renewcommand{\thefootnote}{}%
\footnote {#1}}%
\markboth{\autrun}{\titrun}}
\makeatother
\def\email#1{e-mail: #1}
\def\subjclass#1{{\renewcommand{\thefootnote}{}%
\footnote{\emph{Mathematics Subject Classification (2020):} #1}}}
\def\keywords#1{\par\medskip
\noindent\textbf{Keywords.} #1}

\newtheorem{theorem}{Theorem}[section]

\newtheorem{lemma}[theorem]{Lemma}
\newtheorem{proposition}[theorem]{Proposition}
\theoremstyle{definition}

\newtheorem{remark}[theorem]{Remark}

\numberwithin{equation}{section}

\newtheorem{question}[theorem]{Question}

\xyoption{all}

\begin{document}
\baselineskip=17pt

\titlerunning{The interpolation method for the complex Hessian equation on compact K\"ahler manifolds}
\title{The interpolation method for the complex Hessian equation on compact K\"ahler manifolds}

\author{Jiaogen Zhang and Xi Zhang}

\date{}

\maketitle

\address{J. Zhang: School of Mathematics, Hefei University of Technology, Hefei 230009, P.R. China;
\email{zjgmath@huft.edu.cn}}

\address{X. Zhang: School of Mathematics and Statistics, Nanjing University of Science and Technology, Nanjing 210094, China;
\email{mathzx@njust.edu.cn}
}

\subjclass{35B45; 46B70; 32W50; 32Q15}

\begin{abstract}
In current work, we  delve into the study of complex Hessian equations on compact K\"ahler manifolds. By imposing a constraint on both the bisectional curvature and the oscillation of the solution, we demonstrate that the Laplacian estimate can be linearly controlled by its gradient estimate. Consequently, we are able to derive gradient estimate for the solution using a standard interpolation argument, without the need to employ the blow-up technique.  As an application, we demonstrate that when the right-hand side of the complex Hessian equations approximate 1 in the $L^1$ norm, the desired gradient estimate is obtained. 
\end{abstract}
\keywords{Complex Hessian equation, gradient estimate, interpolation theory, K\"ahler manifold}

\section{Introduction}
Let $(M,\omega)$ be a compact K\"ahler manifold with  $\mathrm{dim}_{\mathbb{C}}M=n$. 
For every fixed integer $k=1,\cdots, n$, we are interested in the following complex $k$-th Hessian equation
\begin{equation}\label{complex Hessian equation 1}
\begin{cases}
\omega_{u}^{k}\wedge \omega^{n-k}=h\omega^{n}\,, \\[1mm]
\omega_{u}=\omega+\sqrt{-1} \partial \overline{\partial} u\in\Gamma_{k}(M,\omega)\,.
\end{cases}
\end{equation}
Here, the $ k $-positive cone $\Gamma_{k}(M,\omega)$ is the set of $(1,1)$ forms whose eigenvalues, with respect to $\omega$, lie in the convex cone
\[
\Gamma_k=\{ \lambda \in \mathbb{R}^n \mid \sigma_1(\lambda)>0\,,\dots,\sigma_k(\lambda)>0 \}
\]
with the $j$-th elementary symmetric polynomial defined by
\[
\sigma_{j}(\lambda)=\sum_{1\leq i_{1}<i_{2} <\cdots i_{j}\leq n}\lambda_{i_1}\lambda_{i_2}\cdots \lambda_{i_{j}}, \quad \lambda=(\lambda_1,\lambda_{2}, \cdots, \lambda_{n}),
\]
and $h$ is a positive smooth function on $M$ satisfying the compatibility condition:
\begin{equation}
\int_{M}(h-1)\omega^{n}=0\,.
\end{equation}
A function $u\in C^{2}(M,\mathbb{R})$ is called $k$-admissible if $\omega_{u}\in \Gamma_{k}(M,\omega)$. When $k=1$ this reduces to the Laplacian equation. When $k=n$, equation \eqref{complex Hessian equation 1} reduces to the complex Monge-Amp\`ere equation, which has been thoroughly investigated following the groundbreaking work of Yau \cite{yau}. Therefore, in this work, we primarily concentrate on the intermediate cases where $2\leq  k\leq  n-1$.\medskip

Fully nonlinear partial differential equations (PDEs) that incorporate elementary symmetric polynomials arise frequently in differential geometry. Among these, complex Hessian equations are an important class of PDEs with applications in complex geometry, mathematical physics, and complex dynamical systems. Many researches have focused on understanding the existence, uniqueness, and properties of solutions to these equations, as well as developing new techniques for solving them. A notable example is the Fu-Yau equation \cite{FY07,FY08} arising from the Strominger system in complex geometry. In higher dimensions, the Fu-Yau equation corresponds precisely to a complex second Hessian type equation and has been extensively studied by Chu-Huang-Zhu \cite{CHZ19a,CHZ19b} and by Phong-Picard-Zhang \cite{PPZ16,PPZ17,PPZ19,PPZ21}. Another representative example  is the quaternionic Calabi conjecture on hyperK\"ahler with torsion (HKT) manifolds, proposed by Alesker and Verbitsky \cite{AV10}. In their work, the quaternionic Monge-Amp\`ere equation is reformulated as a special case of the complex Hessian equation on complex manifolds.
\medskip

In the real setting, the Hessian equation has been extensively studied in a considerable amount of literature (see, for instance, \cite{CNS,Chou-Wang,Guan,Li,Tru95,Urbas,Wang} and the references therein). Compared to the real case, the complex setting presents additional difficulties, particularly in controlling higher-order terms arising from complex conjugation. Following closely the case-wise arguments of Chou-Wang \cite{Chou-Wang}, who dealt with the real Hessian type equations, Hou-Ma-Wu \cite{Hou-Ma-Wu} provided a second order estimate on a closed K\"ahler manifold, specifically in the following form:
\begin{equation}\label{HMW}
\|\Delta_{\omega}  u\|_{L^{\infty}}\leq  C\big(1+\|\nabla u\|_{L^{\infty}}\big)^{2}
\end{equation}
for a uniform positive constant $C$. Building on this particular estimate, Dinew and Ko\l{}odziej \cite{DK} were able to obtain the gradient estimate of \eqref{complex Hessian equation 1}.  Eventually, Sz\'{e}kelyhidi \cite{Sze} and Zhang \cite{Zhang} independently proved the solvability of \eqref{complex Hessian equation 1} on compact Hermitian manifolds, while Chu-Huang-Zhang \cite{CHZ,HZ} demonstrated the same for compact almost Hermitian manifolds. Recently, Cheng and Xu \cite{CX25} obtained an upper approximations for $m$-subharmonic functions on compact K\"ahler manifolds with nonnegative holomorphic bisectional curvature. This result guarantees the H\"older regularity of solutions to equation \eqref{complex Hessian equation 1} when $h \in L^{p}(M)$ for any $p > \frac{n}{k}$. \medskip

The Dirichlet problem in a bounded smooth domain of $\mathbb{C}^n$  was solved by Vinacua \cite{Vin} and Li \cite{LSY04}. In a recent work \cite{CP}, Collins and Picard considered the Dirichlet problem on the compact Hermitian manifold  with smooth boundary. The pluripotential theory for complex Hessian operators was given by B{\l}ocki \cite{Blo05a}. Since then, the weak solution for complex Hessian equation has been well developed, see, e.g. \cite{CX25,DK14,KN,LV} and reference therein. \medskip

The gradient estimate for complex Hessian equation  presented in \cite{DK} relies on a blow-up argument. However, acquiring an apriori gradient estimate for complex Hessian equations on complex manifolds through the application of the maximum principle remains a recognized unsolved challenge in this field. To the best of our knowledge, only a limited number of results are currently known. For instance, assuming non-negativity of the holomorphic bisectional curvature, Hou \cite{Hou}, Jbilou \cite{Jbilou}, and Kokarev \cite{Kokarev} established the a priori gradient estimate as they attempted to solve \eqref{complex Hessian equation 1}. Additionally, Zhang \cite{XZhang} derived apriori gradient estimate for \eqref{complex Hessian equation 1} under the supplementary assumption that $\omega_u $ belongs to $ \Gamma_{n}(M,\omega).$ \medskip

In contrast to the blow-up argument presented in \cite{DK}, we attempt to establish the gradient estimate for the complex Hessian equation \eqref{complex Hessian equation 1} through the application of a standard interpolation technique. However, this approach necessitates a more stringent second order estimate than \eqref{HMW} as in \cite{Hou-Ma-Wu}, as outlined below:
\begin{equation}\label{lambda and nabla}
\|\Delta_{\omega}  u\|_{L^{\infty}}\leq  C\big(1+\|\nabla u\|_{L^{\infty}}\big).
\end{equation}
Therefore, a natural question emerges as follows:
\begin{question}\label{linear control}
Let $ u$ be a smooth $k$-admissible solution of \eqref{complex Hessian equation 1} for $k=2,\cdots,n-1$. Does there exists a uniform constant $C$ for which \eqref{lambda and nabla} holds\,?
\end{question}

In current work, we attempt to provide a partial answer to Question \ref{linear control}. Before presenting our main result, it is necessary to revisit a specific constant, as described in \cite{Hou-Ma-Wu}, which is determined in terms of the bisectional curvature of the underlying manifold $(M,\omega)$. More precisely, for every point $x\in M$, let
\[
R_{\mu \overline{\mu} \nu \overline{\nu}}(x)=R(\mu, \overline{\mu}, \nu, \overline{\nu})(x), \qquad \mu\,,\,\nu\in T^{1,0}_{x}M
\]
be the holomorphic bisectional curvature of $(M,\omega)$ at $x$. Then, we introduce a nonnegative curvature constant $\mathcal{R}_{\omega}$ by
\begin{equation*}
\mathcal{R}_{\omega}=
\left\{\begin{array}{ll}
0, \quad &\textrm{if the holomorphic bisectional curvature is non-negative,} \\[1mm]
\underset{x\in M}{\sup}\, \big|\underset{\mu,\nu}{\inf}\, R_{\mu \overline{\mu} \nu \overline{\nu}}(x) \big|,  \quad &\textrm{otherwise}
\end{array}\right.
\end{equation*}
in which the infimum runs through two unit vectors $\mu,\nu$ in $T^{1,0}_{x}M$.

Our main result is the following theorem:
\begin{theorem}\label{main thm}
Let $ u$ be a smooth $k$-admissible solution of \eqref{complex Hessian equation 1}  for $k=2,\cdots,n-1$. Suppose that the bisectional curvature of $(M,\omega)$ and the oscillation of  $u$ on $M$ satisfy the following inequality
\begin{equation}\label{curvature}
\mathcal{R}_{\omega}\mathrm{osc}_{M}u<\frac{1}{2(2+\log 2)} \approx  0.2173. 
\end{equation} 
Then, there exists a constant $C$ that depends on $n$, $k$, $h$, $\mathrm{osc}_{M}u$, and $(M,\omega)$  such that the inequality \eqref{lambda and nabla} is satisfied. 
\end{theorem}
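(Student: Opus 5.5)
We will run a maximum principle argument for a test function in the style of Hou--Ma--Wu \cite{Hou-Ma-Wu}, changing the weight so that the gradient term enters only linearly. We normalize $\sup_M u=0$, so $-\mathrm{osc}_M u\le u\le 0$, and write $K=1+\sup_M|\nabla u|^2$. Let $\lambda_1\ge\cdots\ge\lambda_n$ be the eigenvalues of $\omega_u$ with respect to $\omega$. Since $\omega_u\in\Gamma_k\subset\Gamma_1$ and $\sum_i\lambda_i=n+\Delta_\omega u>0$, every eigenvalue satisfies $|\lambda_i|\le (n-1)\lambda_1$. It therefore suffices to bound $\lambda_1\le C K^{1/2}$. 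Consider
\[
Q=\log\lambda_1+\varphi(|\nabla u|^2)+\psi(u),
\]
where $\varphi(t)=-\tfrac12\log\big(1-\tfrac{t}{2K}\big)$, so that $\varphi'\approx \tfrac{1}{4K}$ and $\varphi''=2(\varphi')^2$. For $\psi$ we take $\psi(u)=-A\log(1+u+\mathrm{osc}_M u)$, or a similar logarithmic or exponential function of $u$. The constants $A$ and the argument of $\psi$ are to be tuned so that $\psi'<0$ and $\psi''\ge c(\psi')^2$ on $[-\mathrm{osc}_M u,0]$. The numbers $2+\log 2$ in \eqref{curvature} strongly suggest that this tuning, with a factor $\log 2$ coming from $\varphi$ on $[0,K]$, is where the constant arises. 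Since $\lambda_1$ may fail to be smooth at the maximum point $x_0$, we perturb in the standard way (as in Sz\'ekelyhidi \cite{Sze}) and work in normal coordinates at $x_0$ in which $\omega_u$ is diagonal.

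Next we apply the linearized operator $L=F^{i\bar i}\nabla_i\nabla_{\bar i}$, with $F=\sigma_k^{1/k}$, to $Q$ at $x_0$. This gives three contributions, which we handle as follows.
\begin{enumerate}[(i)]
\item Differentiating the equation twice gives $L\lambda_1$. Concavity of $F$ supplies good third-order terms $-F^{i\bar j,p\bar q}\nabla_1 u_{i\bar j}\nabla_{\bar 1}u_{p\bar q}$. Commuting covariant derivatives produces curvature terms $F^{i\bar i}(R_{i\bar i1\bar1}\lambda_i - R_{1\bar1 i\bar i}\lambda_1)$, which are bounded below by $-\mathcal R_\omega\lambda_1\sum_iF^{i\bar i}-C\sum_i F^{i\bar i}$.
\item The term $L\varphi$ yields $\varphi'\sum_i F^{i\bar i}(|u_{ii}|^2+\lambda_i^2)$ plus lower-order terms, together with the good term $\varphi''F^{i\bar i}|\nabla_i|\nabla u|^2|^2$.
\item The term $L\psi$ yields $\psi'(k-\sum F^{i\bar i})+\psi''F^{i\bar i}|u_i|^2$. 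Because $\psi'<0$, the piece $-\psi'\sum F^{i\bar i}$ is a positive multiple of $\sum F^{i\bar i}$, and it is used to absorb the curvature term.
\end{enumerate}
At this point the hypothesis \eqref{curvature} enters. The curvature loss is $\mathcal R_\omega\sum F^{i\bar i}$, while the gain from $\psi$ is of order $A/(1+\mathrm{osc}_M u)$. The admissible size of $A$, however, is limited by the requirement that $\psi''$ dominates the bad gradient cross terms $(\psi')^2F^{i\bar i}|u_i|^2$. The smallness $\mathcal R_\omega\,\mathrm{osc}_M u<1/(2(2+\log 2))$ is exactly what lets both requirements hold at once.

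The main obstacle is the third-order term $F^{i\bar i}|\nabla_i\lambda_1|^2/\lambda_1^2$. From $\nabla Q=0$ we have
\[
\frac{\nabla_i\lambda_1}{\lambda_1}=-\varphi'\nabla_i|\nabla u|^2-\psi' u_i .
\]
We split into two cases, following Hou--Ma--Wu and Chou--Wang. For indices $i$ with $\lambda_i\ge -\delta\lambda_1$ we substitute this identity and absorb the resulting terms into $\varphi''|\nabla_i|\nabla u|^2|^2$ and $\psi''|u_i|^2$, using $\varphi''=2(\varphi')^2$ and $\psi''\ge c(\psi')^2$. For the remaining indices we use the concavity term $\sum_{i>1}F^{1\bar1,i\bar i}$-type quantity $\frac{F^{i\bar i}-F^{1\bar1}}{\lambda_1-\lambda_i}|\nabla_i u_{1\bar1}|^2$. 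If this case split does not close, we would first replace $\log\lambda_1$ by $\log\lambda_1$ plus a small multiple of the sum of squared eigenvalues.

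After these absorptions we obtain at $x_0$ an inequality of the form
\[
0\ge \frac{c}{K}\sum_i F^{i\bar i}\lambda_i^2-C\sum_i F^{i\bar i}-C .
\]
We then use $F^{1\bar1}\lambda_1^2\le\sum_i F^{i\bar i}\lambda_i^2$ together with the standard estimates $F^{1\bar1}\ge c\sum_i F^{i\bar i}$ in the relevant case and $\sum_i F^{i\bar i}\ge c>0$. This gives $\lambda_1(x_0)^2\le CK$. Since $\varphi$ and $\psi$ are bounded, the bound transfers from $x_0$ to all of $M$, so $\lambda_1\le CK^{1/2}\le C(1+\|\nabla u\|_{L^\infty})$, which is \eqref{lambda and nabla}.
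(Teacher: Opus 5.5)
Your overall scheme can be made to work: you use $\log\lambda_1$ in place of the paper's $2\sqrt{\lambda_1/\Lambda}$, choose $\psi''\ge 2(\psi')^2$ so that $\psi''$ itself absorbs the $(\psi')^2F^{i\bar i}|u_i|^2$ error, and leave $-\psi'\sum_iF^{i\bar i}$ to fight the curvature. The genuine gap is the endgame, which does not close as written. The inequality you reach, $0\ge \frac{c}{K}\sum_iF^{i\bar i}\lambda_i^2-C\sum_iF^{i\bar i}-C$, does not yield $\lambda_1^2\le CK$. The estimate $F^{1\bar1}\ge c\sum_iF^{i\bar i}$ you invoke fails even when all $\lambda_i>0$. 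For example, take $k=2$ and $\lambda=(L,\epsilon,\dots,\epsilon)$ with $\sigma_2(\lambda)=1$, i.e.\ $\epsilon\approx 1/((n-1)L)$. Then $F^{1\bar1}\sim L^{-1}$, $\sum_iF^{i\bar i}\sim L$ and $\sum_iF^{i\bar i}\lambda_i^2\sim L$, so your final inequality reads $cL/K\le CL+C$ and says nothing.

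The missing point is that the hypothesis must leave a \emph{positive} multiple of $\mathcal{F}=\sum_iF^{i\bar i}$ on the good side. Suppose $-\psi'$ exceeds the total curvature coefficient by a fixed $2\eta>0$. Then for $\lambda_1\ge C/\eta$ the leftover errors $C\mathcal{F}/\lambda_1$ are absorbed, and you get $\eta\mathcal{F}+\frac{c}{K}F^{1\bar1}\lambda_1^2\le C$. This bounds $\mathcal{F}$. A bound on $\mathcal{F}$ along $\{\sigma_k=h\}$ gives $F^{1\bar1}\ge c>0$ (Lemma 2.2 of Hou--Ma--Wu, exactly as in the paper's last step), whence $\lambda_1^2\le CK$.

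Incidentally, once $\varphi''=2(\varphi')^2$ and $\psi''\ge2(\psi')^2$, the identity from $\nabla Q=0$ absorbs $F^{i\bar i}|\nabla_i\lambda_1|^2/\lambda_1^2$ for every index, so no case split or concavity term is needed. The concavity step you sketch for $\lambda_i<-\delta\lambda_1$ would not suffice on its own anyway, since there $(F^{i\bar i}-F^{1\bar1})/(\lambda_1(\lambda_1-\lambda_i))<F^{i\bar i}/\lambda_1^2$.

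You also assert, rather than check, that \eqref{curvature} provides the margin $\eta$, and with the $\psi$ you wrote it does not. For $\psi=-A\log(1+u+\mathrm{osc}_Mu)$ one has $\psi''=(\psi')^2/A$, so $\psi''\ge2(\psi')^2$ forces $A\le\frac12$. The guaranteed gain is then only $\frac{1}{2(1+\mathrm{osc}_Mu)}\mathcal{F}<\frac12\mathcal{F}$. The curvature loss can be $\mathcal{R}_\omega\mathcal{F}$ from $L(\log\lambda_1)$ alone, and up to $\frac32\mathcal{R}_\omega\mathcal{F}$ with the gradient term. So for $\mathcal{R}_\omega\ge1$ and $\mathrm{osc}_Mu$ small, \eqref{curvature} holds but the gain loses.

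Replacing the $1$ by a small $\epsilon>0$ repairs this. The requirement becomes $\mathcal{R}_\omega(\epsilon+\mathrm{osc}_Mu)<\frac13$, which follows from \eqref{curvature} for small $\epsilon$ since $\frac{1}{2(2+\log2)}<\frac13$. Because you use $\log\lambda_1$, the large oscillation of $\psi$ only costs a multiplicative constant when transferring the bound from $x_0$.

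So the constant $2(2+\log2)$ is not produced by your scheme. In the paper it comes from the normalization $2\sqrt{\lambda_1/\Lambda}$. To guarantee $\sqrt{\Lambda/\lambda_1(x_0)}\le\beta=1+\log2$, one needs $\mathrm{osc}\,\varphi,\ \mathrm{osc}\,\psi\le\frac{\log2}{\beta}$. This forces $\psi''=\beta(\psi')^2$ with shift $\mathrm{osc}_Mu$, and gives gain $\frac{1}{2\beta\,\mathrm{osc}_Mu}$ against curvature loss $(1+\frac1\beta)\mathcal{R}_\omega$.
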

\begin{remark}
We notice that the upper bound $\frac{1}{2(2+\log 2)}$ is not sharp in general, whereas we will not pursue it in this work.  It is crucial to observe that the quantity $\mathcal{R}_{\omega}\mathrm{osc}_{M}u$ remains unchanged under scaling transformations of the metric $\omega$. Hence, removing the dependence of $\mathrm{osc}_{M}u$  in inequality \eqref{curvature} offers a promising prospect. Suppose this can be achieved, particularly if 
\begin{equation}\label{curvature new}
\mathcal{R}_{\omega}<\frac{1}{2(2+\log 2)}
\end{equation}
and we consequently derive \eqref{lambda and nabla}, then by rescaling the metric $\omega$, the newly formulated condition in \eqref{curvature new} will be satisfied, thereby comprehensively resolving Question \ref{linear control}.
\end{remark}

In particular, as $h$ approaches $1$ in the $L^{p}$ norm with $p\geq  1$,  we can leverage the stability of solutions to the complex Hessian equations, as demonstrated by Dinew and Kolodziej in \cite{DK14} for $p>1$ and by Guo, Phong, and Tong in \cite{GPT23} for $p=1$, the oscillation of  $u$ on $M$  tends to 0. Consequently, this automatically satisfies the hypothesis \eqref{curvature}. Hence, we can conclude the following:
\begin{theorem}\label{h closes 1}
Let $ u$ be a smooth $k$-admissible solution of \eqref{complex Hessian equation 1}  for $k=2,\cdots,n-1$. Assume that the following inequality holds for a certain uniform constant $\epsilon_{0}$:
\begin{equation}
\|h-1\|_{L^{1}(M)}< \epsilon_{0}\,.   
\end{equation}
Then, there exists a constant $C$ that depends on $n$, $k$, $h$, $\mathrm{osc}_{M}u$, and $(M,\omega)$  such that the inequality \eqref{lambda and nabla} is satisfied.
\end{theorem}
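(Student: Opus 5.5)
Theorem \ref{h closes 1} follows from Theorem \ref{main thm} once we show that $\mathrm{osc}_M u$ becomes small when $h$ is $L^1$-close to $1$. The point is that $u\equiv 0$ solves \eqref{complex Hessian equation 1} with $h\equiv 1$. A quantitative stability estimate should then force the normalized solution to be uniformly small, which makes hypothesis \eqref{curvature} hold automatically.

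\textbf{Normalization.} Replace $u$ by $u-\sup_M u$, so that $\sup_M u=0$. This changes neither $\omega_u$ nor $\mathrm{osc}_M u$. If $\mathcal{R}_\omega=0$, condition \eqref{curvature} holds trivially and Theorem \ref{main thm} applies directly. So we may assume $\mathcal{R}_\omega>0$. Set $\delta=\frac{1}{4(2+\log 2)\mathcal{R}_\omega}$.

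\textbf{Stability estimate.} Next apply the $L^1$ stability theorem of Guo--Phong--Tong \cite{GPT23} for complex Hessian equations. Compare $u$, which solves the equation with right-hand side $h$, with $v\equiv 0$, which solves it with right-hand side $1$. Their result gives
\[
\|u-v-c\|_{L^\infty}\le C\,\|h-1\|_{L^1}^{\gamma}
\]
for a suitable normalizing constant $c$ and some exponent $\gamma>0$. Here $C$ depends on $(M,\omega)$, $n$, $k$, and an entropy or Orlicz-type bound on $h$ of the form $\int h\log^{n+1}(1+h)$. For $h$ smooth and positive this bound is finite and is controlled by $h$, which is allowed in the dependence of the constants.

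\textbf{Choice of $\epsilon_0$ and conclusion.} The estimate gives $\mathrm{osc}_M u\le 2C\epsilon_0^{\gamma}$. Choose $\epsilon_0$ so small that $2C\epsilon_0^\gamma<\delta$. Then $\mathcal{R}_\omega\,\mathrm{osc}_M u<\frac{1}{4(2+\log 2)}<\frac{1}{2(2+\log 2)}$, so \eqref{curvature} holds. Theorem \ref{main thm} now yields \eqref{lambda and nabla}, with a constant depending on $n,k,h,\mathrm{osc}_M u$ and $(M,\omega)$. If one prefers the case $p>1$, the Dinew--Ko\l odziej stability \cite{DK14} works the same way.

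\textbf{Main obstacle.} The delicate point is the dependence of $\epsilon_0$. The stability constant in \cite{GPT23} depends on an entropy bound for $h$, so $\epsilon_0$ is uniform only within a class of $h$ satisfying a fixed bound of that type. I would state this explicitly: for fixed $K$, take $h$ in the class where the entropy quantity is at most $K$, so that $\epsilon_0=\epsilon_0(n,k,\omega,K)$.

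A second point to check is the form of the estimate itself. The result in \cite{GPT23} is usually stated for the normalized difference $u-v$. It should be verified that it gives smallness of the oscillation, and not merely an $L^\infty$ bound. This holds because $\mathrm{osc}_M(u-v)=\mathrm{osc}_M u$ when $v\equiv 0$.
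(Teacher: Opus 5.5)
Your proposal is correct and is essentially the paper's argument. The paper also compares $u$ with the constant solution $\hat u=\frac12(\sup_M u+\inf_M u)$ of the equation with $\hat h=1$, applies the Guo--Phong--Tong stability estimate to get $\mathrm{osc}_M u\le 2\hat C\|h-1\|_{L^1}^{\hat\delta}$, chooses $\epsilon_0$ so that \eqref{curvature} holds when $\mathcal{R}_\omega>0$, and then invokes Theorem \ref{main thm}. The only difference is that the paper quotes the stability theorem under an $L^q$ bound on $h$ with $q>n/k$ rather than an entropy bound (for $k<n$ the relevant integrability is at the $L^{n/k}$ level, not an $L\log L$ bound on $h$ itself); this is harmless for smooth $h$, and your caveat that $\epsilon_0$ depends on such a bound is one the paper leaves implicit.
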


Generally speaking, the estimate \eqref{lambda and nabla} is quiet useful in the investigation of complex Hessian equations in complex geometry.  As an illustrative application, we  will employ a standard interpolation technique to deduce the following gradient estimate. 
\begin{theorem}\label{gradient estimate} Under the same assumptions as in either Theorem  \ref{main thm} or Theorem \ref{h closes 1}. Then, there exists a  constant $C$ that depends on $n$, $k$, $h$, $\mathrm{osc}_{M}u$, and $(M,\omega)$ such that 
\begin{equation}
\|\nabla u\|_{L^{\infty}}+\|\Delta_{\omega}  u\|_{L^{\infty}} \leq  C.
\end{equation}
\end{theorem}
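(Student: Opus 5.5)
Given Theorem \ref{main thm} (or Theorem \ref{h closes 1}), we already have the linear bound $\|\Delta_{\omega}u\|_{L^\infty}\le C(1+\|\nabla u\|_{L^\infty})$. The plan is to combine it with a $C^0$ bound and an interpolation inequality between $C^0$, $C^1$ and $C^2$. Since $u$ is $k$-admissible, $\omega_u\in\Gamma_k\subset\Gamma_1$, so $\Delta_\omega u>-n$, i.e. $\mathrm{tr}_\omega\omega_u>0$. Together with the compatibility condition $\int_M(h-1)\omega^n=0$, we may normalize $\sup_M u=0$ (or $\int_M u\,\omega^n=0$). The hypotheses then give $\|u\|_{L^\infty}\le \mathrm{osc}_M u$, which is an allowed dependence of the constant.

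The key analytic input is a Gagliardo--Nirenberg/Landau type interpolation on the compact manifold $(M,\omega)$: for every $\varepsilon>0$,
\[
\|\nabla u\|_{L^\infty}\le \varepsilon\|\nabla^2 u\|_{L^\infty}+C_\varepsilon\|u\|_{L^\infty}.
\]
This is not yet enough, since we only control the complex Laplacian, not the full real Hessian $\nabla^2u$. Two routes are available.

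\emph{First route.} Work with the Hölder form of interpolation,
\[
\|\nabla u\|_{L^\infty}\le \varepsilon [\nabla u]_{C^{\alpha}}+C_\varepsilon\|u\|_{L^\infty},
\]
and use Schauder/Calderón--Zygmund theory for $\Delta_\omega$ (via $W^{2,p}$ with $p$ large and Morrey), which yields
\[
[\nabla u]_{C^{\alpha}}\le C(\|\Delta_\omega u\|_{L^\infty}+\|u\|_{L^\infty}).
\]

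\emph{Second route (more elementary).} At a point $x_0$ where $|\nabla u|$ attains its maximum $K$, work in normal coordinates in a ball of radius $r$. The equation $\Delta_\omega u=f$ with $|f|\le C(1+K)$ and $|u|\le \mathrm{osc}_Mu$ gives, by interior gradient estimates for the Laplacian on a ball of radius $r$,
\[
K\le C\bigl(r^{-1}\,\mathrm{osc}_M u+r\sup|f|\bigr)\le C\bigl(r^{-1}+r(1+K)\bigr).
\]
This local estimate follows from the Green representation and is uniform in $x_0$, using a cover by coordinate balls of fixed size.

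Either route combines with Theorem \ref{main thm} to give
\[
K\le \varepsilon C(1+K)+C_\varepsilon\,\mathrm{osc}_Mu.
\]
Choosing $\varepsilon$ (equivalently $r$) small, so that $\varepsilon C\le 1/2$, and absorbing gives $K\le C$. Substituting back into \eqref{lambda and nabla} then bounds $\|\Delta_\omega u\|_{L^\infty}$.

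The main obstacle is ensuring that the constant in the elliptic interpolation step depends only on $(M,\omega)$, so that absorption is legitimate. In particular, the bound on $\nabla u$ must come from $\Delta_\omega u$ alone, since the full Hessian is not controlled by the equation. I would settle this by the local Green function estimate on geodesic balls of a fixed radius $r_0$ below the injectivity radius, with scaling in $r\le r_0$, which is standard.
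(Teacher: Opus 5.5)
Your first route is the paper's argument. The paper normalizes so that $\|u\|_{L^\infty}=\mathrm{osc}_M u$, uses $\|\nabla u\|_{L^\infty}\le C(\epsilon,\alpha)\|u\|_{L^\infty}+\epsilon\|u\|_{C^{1,\alpha}}$, bounds $\|u\|_{C^{1,\alpha}}$ by Morrey's inequality and $L^q$ estimates for $\Delta_\omega$ with $q=2n/(1-\alpha)$, and absorbs via the linear bound \eqref{lambda and nabla}; your second route, using scaled interior gradient estimates on small geodesic balls, is a valid local variant of the same absorption argument.
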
\medskip

\begin{remark}
The interior $W^{2,p}$ estimate for the Dirichlet problem of the real Monge-Amp\`ere equation 
\[
\det (u_{ij})=h
\]
was derived in the seminal work of Caffarelli \cite{Caf}, under the condition that $h$ is a small perturbation of $1$ (i.e. $\|h-1\|_{L^{\infty}} < \epsilon$ for some small constant $\epsilon$). Cheng and Xu \cite{CX23} subsequently obtained an analogous result for the complex Monge-Amp\`ere equation. Theorem \ref{gradient estimate} can be regarded as a global counterpart of these interior estimates, proving global $C^2$ estimates for complex Hessian operators when $h$ is close to $1$ in an $L^1$ sense.
\end{remark}

The rest of the article is organised as follows: In Section \ref{proof of main Thm}, we will establish the second order estimate for the equation \eqref{complex Hessian equation 1}, thereby concluding the proof of Theorem \ref{main thm}. This type of bound is appropriate for employing a standard interpolation argument, which provides an estimate for the gradient. This is detailed in Section \ref{gradient section}. The purpose of Section \ref{h closes 1 section} is to validate the assumption stated in Theorem \ref{main thm} under the hypothesis that $h$ approaches appropriately  to 1 in the $L^{1}$ norm.
\medskip

\section{Proof of Theorem \ref{main thm}}\label{proof of main Thm}

We consider the concave operator
\[
F(\omega_{u})=\big(\sigma_{k}(\omega_{u})\big)^{\frac{1}{k}}=\Big(\frac{\omega_{u}^{k}\wedge \omega^{n-k}}{\omega^{n}}\Big)^{\frac{1}{k}}, \qquad \omega_{u}\in \Gamma(M,\omega)\]
and denote its first and second partial derivatives by
\[
F^{i\overline j}=\frac{\partial F}{\partial u_{i\overline j}}\,, \qquad F^{i\overline j,k\overline{l}}=\frac{\partial^{2} F}{\partial u_{i\overline j}\partial u_{k\overline l}}\,.
\]
For every $v\in C^{2}(M,\mathbb{R})$ we define the linearized operator associated with $F$ as
\begin{equation*}
L(v)=\sum_{i,j=1}^{n} F^{i\overline{j}}v_{i\overline{j}}\,.
\end{equation*}
Note that the positive definite matrix $(F^{i\overline j})$  becomes diagonal at points where the tensor $\omega_{u}$ is diagonal. In such a case we have 
\begin{equation}\label{Fik}
F^{i\overline j}=\delta_{ij}F^{i\overline i}=\frac{1}{k}\sigma_{k}(\omega_{u})^{\frac{1}{k}-1}\frac{\partial \sigma_{k}(\lambda)}{\partial \lambda_{i}}(\omega_{u})\delta_{ij}\,.
\end{equation}

The proof of Theorem \ref{main thm} hinges crucially on the application of the maximum principle. To implement it effectively, we construct a new auxiliary function $Q$, which differs substantially from those defined in the existing literature.
\subsection{Construction of auxiliary function}
In what follows, we always assume that the solution to  \eqref{complex Hessian equation 1} is non-constant, which implies that the oscillation of $u$ is non-zero, i.e. $\mathrm{osc}_Mu>0$. This is because, in the case where $u$ is a constant, all higher-order estimates would inevitably be zero.

Let $\lambda = (\lambda_{1}, \cdots, \lambda_{n})$ denote the complex eigenvalues of $\omega_u$ with respect to $\omega$, and suppose they are arranged in decreasing order  
\begin{equation}\label{eigenvalue decreasing}
\lambda_{1}\geq \lambda_{2}\geq \cdots\geq \lambda_{n}\,. 
\end{equation}
Consider the following two useful quantities:
\begin{equation}\label{KD}
\Lambda=\sup_{M}\lambda_{1}\,, \qquad K=\sup_{M}|\nabla u|_{\omega}+D\,,
\end{equation}
where $D$ is a uniform positive constant that will be determined later. We introduce the following auxiliary function
\begin{equation}
Q =2\sqrt{\frac{\lambda_{1}}{\Lambda}}+\varphi(|\nabla u|_{\omega}^{2})+\psi(u)
\end{equation}
which defined on $M$, where
\begin{equation*}
\begin{split}
\varphi(s) &= -\frac{1}{\beta}\log(2K^{2}-s)\,, \quad \qquad \qquad s\in [0,K^2]\,,\\
\psi(t)&=-\frac{1}{\beta}\log\Big(\mathrm{osc}_{M}u+t-\inf_{M}u \Big)\,, \quad t\in \Big[\inf_{M}u,\, \sup_{M}u\Big].
\end{split}
\end{equation*}
Here and hereafter $\beta=1+\log 2$ is a specific constant.
It is clear that
\begin{equation}\label{xieta}
\mathrm{osc}_{M}\varphi(|\nabla u|_{\omega}^{2})\leq  \frac{1}{\beta}\log 2\,, \quad \varphi''=\beta(\varphi')^{2}\,,  \quad  \frac{1}{ \beta K^{2}} \geq  \varphi' \geq  \frac{1}{2\beta K^{2}}
\end{equation}
as well as
\begin{equation}\label{varphi}
\mathrm{osc}_{M}\psi(u)\leq  \frac{1}{\beta}\log 2\,, \quad \psi''=\beta(\psi')^{2}\,, \quad \frac{1}{\beta\mathrm{osc}_{M} u}\geq  -\psi'\geq  \frac{1}{2\beta\mathrm{osc}_{M} u}\,.    
\end{equation}

Let $x_{0}\in M$ be the point where the auxiliary function $Q$ attains its maximum. To prove Theorem \ref{main thm},  it is sufficient to  show
\begin{equation}\label{goal}
\lambda_{1}(x_{0}) \leq  CK.
\end{equation}
Indeed, assume that $\Lambda$ is achieved at an another point $y_{0}\in M$, i.e. $\Lambda=\lambda_{1}(y_{0})$. From the maximality of $Q(x_{0})$, that is, $Q(y_{0})\leq  Q(x_{0})$, we infer from \eqref{KD}, \eqref{xieta},  and \eqref{varphi} that
\begin{equation*}
\begin{split}
2&\leq  2\sqrt{\frac{\lambda_{1}(x_{0})}{\Lambda}}+\mathrm{osc}_{M}\,\varphi(|\nabla u|_{\omega}^{2}) +\mathrm{osc}_{M}\,\psi(u) \leq  2\sqrt{\frac{\lambda_{1}(x_{0})}{\Lambda}}+\frac{2}{\beta}\log 2\,.
\end{split}
\end{equation*}
This gives us
\begin{equation}\label{bei shu}
\sqrt{\lambda_{1}}(x_{0})\leq  \sqrt{\Lambda}\leq  \beta\sqrt{\lambda_{1}}(x_{0})\,.
\end{equation}
and thus yields the desired estimate \eqref{lambda and nabla}.   

Around the point $x_{0}$, we choose a a local normal coordinate system $\{e_{i}\}_{i=1}^{n}$ with respect to $\omega$  such that, at that point,
\begin{equation*} 
 g_{i\overline{j}}=\delta_{ij}\,, \qquad g_{i\overline{j}}+u_{i\overline j}=\delta_{ij}\lambda_{i}\,.
\end{equation*}
It then follows from \eqref{eigenvalue decreasing} that at $x_{0}$
\begin{equation}\label{F ii}
F^{1\overline{1}} \leq  F^{2\overline{2}} \leq  \cdots \leq  F^{n\overline{n}}\,.
\end{equation}
Utilizing the perturbation argument outlined in \cite{Sze}, we may further assume $\lambda_{1}(x_0)> \lambda_{2}(x_0)$ and thus $Q$ is smooth near $x_0$. At $x_{0}$, the maximum principle shows that
\begin{equation}\label{5..10}
\frac{(\lambda_{1})_{i}}{\sqrt{\Lambda\lambda_{1}}}=-\varphi'(|\nabla u|_{\omega}^{2})_{i}-\psi' u_{i}\,, \quad
\text{for $i=1, \cdots,n$},
\end{equation}
and
\begin{equation}\label{L Q}
\begin{split}
0 \geq  L(Q)  
= \frac{L(\lambda_{1})}{\sqrt{\Lambda\lambda_{1}}} &- F^{i\overline{i}}\frac{|(\lambda_{1})_{i}|^{2}}{2\sqrt{\Lambda\lambda_{1}}\lambda_{1}}
+\varphi'L(|\nabla u|_{\omega}^{2})
 \\[1mm]
& +\varphi'' F^{i\overline{i}}|(|\nabla u|_{\omega}^{2})_{i}|^{2}+\psi'L(u)+\psi'' F^{i\overline{i}}|u_{i}|^{2}.
\end{split}
\end{equation}

Starting from now, all calculations will be carried at the point $x_{0}$. We will adhere to the Einstein summation convention. The symbol $C_{0}$ will be used to represent a uniform constant that solely depends on $n$,  $k$, $\sup_{M}h$, $\|\nabla\sqrt[k]{h}\|_{C^{1}}$, $\mathrm{osc}_{M}u$, and $\omega$.

\subsection{Proof of Theorem \ref{main thm} when  $\mathcal{R}_{\omega}>0$}\label{Continued proof of Theorem}

We begin by establishing the following key inequality.
\begin{proposition}\label{lower bound of L Q}
At $x_{0}$, we have
\begin{equation}\label{prop}
\begin{split}
C_{0}\Big(\frac{1}{K}+\frac{\mathcal{F}}{\lambda_{1}}\Big)+\frac{\beta+1}{\beta}\mathcal{R}_{\omega}\mathcal{F}
& \geq 
G+B + \varphi'F^{i\overline{i}}\Big(|u_{i\overline i}|^{2}+\sum_{p=1}^{n}|u_{ip}|^{2}\Big)  \\[1mm]
& \qquad
+\varphi'' F^{i\overline{i}}|(|\nabla u|_{\omega}^{2})_{i}|^{2}+
\psi'L(u)+\psi'' F^{i\overline{i}}|u_{i}|^{2}\,,
\end{split}
\end{equation}
where the terms $G$, $B$ and $\mathcal{F}$ are given by
\[
\begin{split}
G &= -\frac{1}{\sqrt{\Lambda\lambda_{1}}} F^{i\overline{j},k\overline{l}}u_{i\overline{j}1} u_{k\overline{l}\overline{1}}\,,\\[2mm]
B &= -F^{i\overline{i}}\frac{|(\lambda_{1})_{i}|^{2}}{2\sqrt{\Lambda\lambda_{1}}\lambda_{1}}\,,\\[2mm]
\mathcal{F} &= \sum_{i=1}^{n}F^{i\overline{i}}\,.
\end{split}
\]
\end{proposition}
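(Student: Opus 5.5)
This is a direct consequence of \eqref{L Q}: we expand the two terms $L(\lambda_1)$ and $\varphi' L(|\nabla u|_\omega^2)$, keep every term already appearing on the right of \eqref{prop} exactly as it is, and bound the remainders by the left side of \eqref{prop}. The $B$, $\varphi''$, $\psi'$ and $\psi''$ terms of \eqref{L Q} pass through unchanged. So the work is in two standard third-derivative commutation computations on a Kähler manifold.

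\textbf{Step 1: expansion of $L(\lambda_1)$.} Since $\lambda_1(x_0)>\lambda_2(x_0)$, we have at $x_0$
\[
(\lambda_1)_{i\overline i}=u_{1\overline 1 i\overline i}+\sum_{p>1}\frac{|u_{1\overline p i}|^2+|u_{p\overline 1 i}|^2}{\lambda_1-\lambda_p}\geq u_{1\overline 1 i\overline i}.
\]
The quotient terms are nonnegative and we discard them. We then differentiate the equation $F(\omega_u)=h^{1/k}$ twice in the $e_1,\overline e_1$ directions, which gives
\[
F^{i\overline i}u_{i\overline i1\overline1}=(h^{1/k})_{1\overline1}-F^{i\overline j,k\overline l}u_{i\overline j1}u_{k\overline l\overline1}.
\]
Commuting derivatives on a Kähler manifold gives
\[
u_{1\overline11\overline1}\ \text{vs.}\ u_{i\overline i1\overline1}:\qquad u_{1\overline1i\overline i}=u_{i\overline i1\overline1}+R_{i\overline i1\overline1}\lambda_1-R_{1\overline1i\overline i}\lambda_i
\]
(up to sign conventions, with the $\omega$-part absorbed). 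Dividing by $\sqrt{\Lambda\lambda_1}$ produces $G$. The $h$-term is at most $C_0/\sqrt{\Lambda\lambda_1}\le C_0/K$, where we may assume $\lambda_1\ge K$, since otherwise \eqref{goal} already holds.

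For the curvature terms we use $R_{i\overline i1\overline1}\ge-\mathcal R_\omega$. Then $\sum_i F^{i\overline i}(\lambda_1-\lambda_i)R\ge-\mathcal R_\omega\sum_iF^{i\overline i}(\lambda_1-\lambda_i)$ after symmetrizing the two curvature terms. Next we need $\sum_iF^{i\overline i}\lambda_i=h^{1/k}\le C_0$, which holds by homogeneity of degree one, and $\sum_i F^{i\overline i}|\lambda_i|\le \lambda_1\mathcal F+C_0$ up to a constant. Combined with $\lambda_1/\sqrt{\Lambda\lambda_1}\le 1$, this gives a curvature contribution of at most $\mathcal R_\omega\mathcal F$ plus $C_0\mathcal F/\lambda_1$ errors. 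Here one must use that the negative eigenvalues satisfy $|\lambda_i|\le C\lambda_1$ inside $\Gamma_k$.

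\textbf{Step 2: expansion of $\varphi' L(|\nabla u|^2)$.} We have
\[
L(|\nabla u|^2)=F^{i\overline i}\Big(\sum_p|u_{pi}|^2+\sum_p|u_{p\overline i}|^2\Big)+2\,\mathrm{Re}\,F^{i\overline i}u_{i\overline i p}u_{\overline p}+F^{i\overline i}R_{i\overline i p\overline q}u_q u_{\overline p}.
\]
The mixed second derivatives give at least $F^{i\overline i}|u_{i\overline i}|^2$. The third-order term equals $2\mathrm{Re}\,(h^{1/k})_p u_{\overline p}\ge -C_0K$, and multiplied by $\varphi'\le 1/(\beta K^2)$ it is at most $C_0/K$. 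The curvature term is $\ge-\mathcal R_\omega\mathcal F|\nabla u|^2$, so times $\varphi'$ it is at most $\mathcal R_\omega\mathcal F/\beta$. Adding Steps 1 and 2 gives the total curvature coefficient $(1+1/\beta)\mathcal R_\omega\mathcal F=\frac{\beta+1}{\beta}\mathcal R_\omega\mathcal F$, matching \eqref{prop}.

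\textbf{Main obstacle.} The delicate point is the curvature bookkeeping in Step 1. We must get coefficient exactly $\mathcal R_\omega\mathcal F$, rather than a larger multiple, while sending everything else into $C_0(1/K+\mathcal F/\lambda_1)$. I would write $R_{i\overline i1\overline1}(\lambda_1-\lambda_i)=R_{i\overline i1\overline1}(\lambda_1-1)-R_{i\overline i1\overline1}(\lambda_i-1)$. The first piece is $\ge-\mathcal R_\omega(\lambda_1-1)F^{i\overline i}$. The second is controlled via $\sum F^{i\overline i}\lambda_i\le C_0$, together with the lower bound for the negative $\lambda_i$ and the prefactor $1/\sqrt{\Lambda\lambda_1}\le 1/\lambda_1$; this is what produces the $C_0\mathcal F/\lambda_1$ term.
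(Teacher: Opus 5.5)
Your overall route is the paper's: you insert lower bounds for $L(\lambda_1)$ and $L(|\nabla u|_\omega^2)$ into \eqref{L Q}. Your Step 2 is exactly the paper's Lemma~\ref{lower bound of L de u} combined with $\varphi' K^2\le 1/\beta$. The problem is in Step 1, at exactly the point you call the main obstacle. The plan in your last paragraph does not produce the coefficient $\mathcal{R}_\omega\mathcal{F}$, and the auxiliary facts you invoke are either unnecessary or false.

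The correct step is the first inequality you write. Since $\lambda_1-\lambda_i\ge 0$ and $R_{1\overline 1 i\overline i}\ge-\mathcal R_\omega$,
\[
\sum_i F^{i\overline i}(\lambda_1-\lambda_i)R_{1\overline 1 i\overline i}\ \ge\ -\mathcal R_\omega\Big(\lambda_1\mathcal F-\sum_i F^{i\overline i}\lambda_i\Big)=-\mathcal R_\omega\big(\lambda_1\mathcal F-\sqrt[k]{h}\big)\ \ge\ -\mathcal R_\omega\lambda_1\mathcal F .
\]
Dividing by $\sqrt{\Lambda\lambda_1}$ and using $\sqrt{\lambda_1/\Lambda}\le 1$ gives exactly $-\mathcal R_\omega\mathcal F$. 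What this uses is the \emph{lower} bound $\sum_iF^{i\overline i}\lambda_i=\sqrt[k]{h}>0$ from Euler's identity, not the upper bound $\le C_0$. Nothing about negative eigenvalues or about $\sum_iF^{i\overline i}|\lambda_i|$ is needed.

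By contrast, your splitting $R(\lambda_1-\lambda_i)=R(\lambda_1-1)-R(\lambda_i-1)$ discards the sign of $\lambda_1-\lambda_i$. It leaves the term $-\sum_iF^{i\overline i}R_{i\overline i1\overline1}(\lambda_i-1)$, whose curvature weights depend on $i$ and have no sign, so $\sum_iF^{i\overline i}\lambda_i$ cannot control it. The natural bound is $\sup|R|\sum_iF^{i\overline i}|\lambda_i|$, which is of size $\lambda_1\mathcal F$. After dividing by $\sqrt{\Lambda\lambda_1}$ this leaves an error $C\mathcal F$, with $C$ depending on the full curvature tensor. That error is not of the form $C_0\mathcal F/\lambda_1$, and it would destroy the sharp coefficient $\frac{\beta+1}{\beta}\mathcal R_\omega$ that is later compared with $-\psi'$.

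Moreover, the claimed bound $\sum_iF^{i\overline i}|\lambda_i|\le\lambda_1\mathcal F+C_0$ is false in general. Take $k=2$, $n=5$, $\lambda=(L,L,L,L,-aL)$ with $\sigma_2(\lambda)=h$ and $a\uparrow 3/2$, so that $L\to\infty$. Then $\sum_iF^{i\overline i}|\lambda_i|-\lambda_1\mathcal F=2(a-1)L^2/\sqrt h\to\infty$.

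A minor point: you need not assume $\lambda_1\ge K$ to handle the $h$-term. Since $\mathcal F\ge 1$, you have $(\sqrt[k]{h})_{1\overline1}\ge -C_0\ge -C_0\mathcal F$. Dividing by $\sqrt{\Lambda\lambda_1}\ge\lambda_1$ then gives the $C_0\mathcal F/\lambda_1$ term unconditionally, as in the paper.
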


Notice that, owing to the concavity of $\sigma_{k}^{\frac{1}{k}}$ on $\Gamma_{k}$, the term $G$ is nonnegative. In order to prove Proposition \ref{lower bound of L Q},  it is essential to estimate the lower bounds for $L(\lambda_{1})$ and $L(|\nabla u|_{\omega}^{2})$ separately.

\begin{lemma}\label{lower bound of L lambda1}
At $x_{0}$, we have
\[
L(\lambda_{1}) \geq 
-F^{i\overline{j},k\overline{l}}u_{i\overline{j}1} u_{k\overline{l}\overline{1}}
-\mathcal{R}_{\omega}\lambda_{1}\mathcal{F}-C_{0}\mathcal{F}.
\]
\end{lemma}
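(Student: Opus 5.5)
Since $\lambda_1(x_0)>\lambda_2(x_0)$, $\lambda_1$ is smooth near $x_0$. The plan is the standard eigenvalue perturbation computation for $\lambda_1$ at $x_0$, followed by differentiating the equation twice and commuting derivatives via the Kähler identities. In the normal coordinates at $x_0$ (with $g_{i\bar j}+u_{i\bar j}=\lambda_i\delta_{ij}$) the first variation formula gives $(\lambda_1)_p=u_{1\bar 1 p}$. The second variation formula gives
\[
(\lambda_1)_{p\bar p}=u_{1\bar 1p\bar p}+\sum_{q>1}\frac{|u_{q\bar 1 p}|^2+|u_{1\bar q p}|^2}{\lambda_1-\lambda_q}\ \geq\ u_{1\bar1p\bar p},
\]
because $\lambda_1>\lambda_q$ for $q>1$. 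Hence $L(\lambda_1)\geq F^{i\bar i}u_{1\bar1 i\bar i}$, and the positive second-variation terms are simply discarded.

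Next I commute derivatives. On a Kähler manifold, $u_{1\bar1 i\bar i}=u_{i\bar i1\bar1}+R_{i\bar i1\bar1}u_{1\bar1}-R_{1\bar1 i\bar i}u_{i\bar i}$, with sign conventions chosen so that the curvature terms read $\sum R_{i\bar i 1\bar 1}(\lambda_1-\lambda_i)$. Writing $u_{i\bar i}=\lambda_i-1$, this gives
\[
F^{i\bar i}u_{1\bar1i\bar i}=F^{i\bar i}u_{i\bar i1\bar1}+F^{i\bar i}R_{i\bar i1\bar1}(\lambda_1-\lambda_i).
\]
Differentiating $F(\omega_u)=h^{1/k}$ twice in the $e_1,\bar e_1$ directions gives
\[
F^{i\bar j}u_{i\bar j1\bar1}=(h^{1/k})_{1\bar1}-F^{i\bar j,k\bar l}u_{i\bar j1}u_{k\bar l\bar1}\geq -F^{i\bar j,k\bar l}u_{i\bar j1}u_{k\bar l\bar1}-C_0.
\]
The constant $C_0$ is then absorbed into $C_0\mathcal F$, using the standard lower bound $\mathcal F\geq c(n,k)>0$. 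This lower bound follows from Maclaurin's inequality $\sum_i\partial_i\sigma_k\geq c\,\sigma_{k-1}\geq c'\sigma_k^{(k-1)/k}$, combined with the upper bound on $h$.

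The main point is the curvature term. Since $\lambda_1\geq\lambda_i$ and $F^{i\bar i}>0$, each summand satisfies
\[
F^{i\bar i}R_{i\bar i1\bar1}(\lambda_1-\lambda_i)\geq -\mathcal R_\omega F^{i\bar i}(\lambda_1-\lambda_i)
\]
by the definition of $\mathcal R_\omega$ as the bound for the negative part of the bisectional curvature. It then remains to control $\sum_i F^{i\bar i}(\lambda_1-\lambda_i)\leq\lambda_1\mathcal F-\sum_iF^{i\bar i}\lambda_i$. For this I use the identity $\sum_i F^{i\bar i}\lambda_i=F=h^{1/k}>0$, which holds by homogeneity of degree one of $\sigma_k^{1/k}$. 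This gives $-\sum_i F^{i\bar i}\lambda_i\leq 0$; note that negative $\lambda_i$ cause no trouble here, because the weighted sum is positive. Therefore
\[
\sum_i F^{i\bar i}R_{i\bar i1\bar1}(\lambda_1-\lambda_i)\geq -\mathcal R_\omega\lambda_1\mathcal F.
\]
Collecting the three steps yields $L(\lambda_1)\geq -F^{i\bar j,k\bar l}u_{i\bar j1}u_{k\bar l\bar1}-\mathcal R_\omega\lambda_1\mathcal F-C_0\mathcal F$.

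The delicate step is getting the commutation signs right, so that the curvature appears exactly in the combination $\lambda_1-\lambda_i$ with bisectional curvature in directions $e_i,e_1$. That combination is what lets the homogeneity identity absorb the $\lambda_i$ part without any bound on $|\lambda_i|$.
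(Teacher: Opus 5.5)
Your proposal is correct and follows essentially the same route as the paper. You discard the nonnegative second-variation terms of $\lambda_1$, commute $u_{1\bar 1 i\bar i}$ to $u_{i\bar i 1\bar 1}$ to pick up $R_{1\bar 1 i\bar i}(\lambda_1-\lambda_i)\geq -\mathcal{R}_\omega(\lambda_1-\lambda_i)$, substitute the twice-differentiated equation, and finish with the Euler identity $\sum_i F^{i\bar i}\lambda_i=\sqrt[k]{h}>0$ and the bound $\mathcal{F}\geq c(n,k)>0$, exactly as the paper does. The only differences are cosmetic: working with covariant derivatives removes the bounded metric terms $F^{i\bar i}(g_{1\bar 1 i\bar i}-g_{i\bar i 1\bar 1})$ that the paper absorbs into $C_0\mathcal{F}$, and the lower bound on $\mathcal{F}$ needs no upper bound on $h$, since the powers of $\sigma_k$ cancel.
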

\begin{proof}
For completeness, the comprehensive proof outlined in \cite{Sze} is presented here. The renowned formulas for computing the derivatives of the eigenvalue  $\lambda_1$  at a diagonal matrix possessing distinct eigenvalues are given below (refer to \cite{Gerhardt,Spruck} for further details):
\begin{equation}\label{derivatives of lambda}
\begin{split}
\lambda_{1}^{pq}& =\delta_{1p}\delta_{1q}\,,\\
\lambda_{1}^{pq,rs}& =(1-\delta_{1p})\frac{\delta_{1q}\delta_{1r}\delta_{ps}}{\lambda_{1}-\lambda_{p}}+(1-\delta_{1r})\frac{\delta_{1s}\delta_{1p}\delta_{rq}}{\lambda_{1}-\lambda_{r}}\,,
\end{split}
\end{equation}
where $\lambda_{1}^{pq}$ represents the derivative with respect to the entry located at the $p$-th row and $q$-th column. Let us define a matrix
\[
A_{pq}=\delta_{pq}+g^{p\overline{i}}u_{q\overline{i}}\,.
\]
It follows from \eqref{derivatives of lambda} along with the K\"ahlerity of $\omega$ that
\begin{equation}
\begin{split}
\lambda_{1,i}&=\lambda_{1}^{pq}(A_{pq})_{i}=u_{1\overline{1}i}\,,\\
\lambda_{1,i\overline{i}}&=\lambda_{1}^{pq,rs}(A_{pq})_{i}(A_{rs})_{\overline i}+\lambda_{1}^{pq}(A_{pq})_{i\overline{i}}\\
&=g_{1\overline{1}i\overline i}+u_{1\overline{1}i\overline i}+\sum_{p>1}\frac{|u_{p\overline 1i}|^{2}+|u_{1\overline{p}i}|^{2}}{\lambda_{1}-\lambda_{p}}\,.
\end{split}
\end{equation}
By commuting the covariant derivatives, we have
\begin{equation}\label{nonnegative 1}
\begin{split}
u_{1\overline{1}i\overline i}&=u_{i\overline i1\overline{1}}+(u_{1\overline{1}}-u_{i\overline i})R_{1\overline 1i\overline{i}}\\
&=u_{i\overline i1\overline{1}}+(\lambda_{1}-\lambda_{i})R_{1\overline 1i\overline{i}}\\
&\geq  u_{i\overline i1\overline{1}}+(\lambda_{1}-\lambda_{i})\inf_{l\neq 1}R_{1\overline 1l\overline{l}}\\
&\geq  u_{i\overline i1\overline{1}}-\mathcal{R}_{\omega}(\lambda_{1}-\lambda_{i})\,.
\end{split}
\end{equation}
It follows that
\begin{equation}\label{AA}
\begin{split}
F^{i\overline i}\lambda_{1,i\overline{i}}\geq  &F^{i\overline i}u_{i\overline i1\overline{1}}-\mathcal{R}_{\omega}\sum_{i=1}^{n}F^{i\overline i}(\lambda_{1}-\lambda_{i})+F^{i\overline i}g_{1\overline{1}i\overline i}\,.
\end{split}
\end{equation}
Applying $e_{1}e_{\overline{1}}$  to the equation \eqref{complex Hessian equation 1}, we see that
\begin{equation*}
F^{i\overline i}(g_{i\overline{i}1\overline{1}}+u_{i\overline{i}1\overline{1}})+F^{i\overline{j},k\overline{l}}u_{i\overline{j}1} u_{k\overline{l}\overline{1}}=(\sqrt[k]{h})_{1\overline{1}}\,.
\end{equation*}
Inserting it into \eqref{AA}, we infer that
\begin{equation*} 
\begin{split}
F^{i\overline i}\lambda_{1,i\overline{i}}\geq  &-F^{i\overline{j},k\overline{l}}u_{i\overline{j}1} u_{k\overline{l}\overline{1}}-\mathcal{R}_{\omega}\sum_{i=1}^{n}F^{i\overline i}(\lambda_{1}-\lambda_{i})+F^{i\overline i}(g_{1\overline{1}i\overline i}-g_{i\overline{i}1\overline{1}})+(\sqrt[k]{h})_{1\overline{1}}\\
\geq  & -F^{i\overline{j},k\overline{l}}u_{i\overline{j}1} u_{k\overline{l}\overline{1}}-\mathcal{R}_{\omega}\lambda_{1}\mathcal{F}-C_{0}\mathcal{F}\,.
\end{split}
\end{equation*}
Here, we have used the facts that $\mathcal{F}\geq  1$ and the homogeneity of $F$ in the last inequality. Indeed,  at the extremal point of $Q$, it can be deduced from   \eqref{Fik} and the Newton-Maclaurin inequality that
\begin{equation}\label{F dayu 1}
\mathcal{F}=\frac{1}{k}\sigma_{k}(\lambda)^{\frac{1-k}{k}}\sum_{i=1}^n \frac{\partial \sigma_{k}(\lambda)}{\partial \lambda_i}=\frac{n-k+1}{k}\sigma_{k}(\lambda)^{\frac{1-k}{k}}\sigma_{k-1}(\lambda)\geq  1\,.
\end{equation}
This completes the proof of the lemma.
\end{proof}
\begin{lemma}\label{lower bound of L de u}
At $x_{0}$, we have
\begin{equation*}
L(|\nabla u|_{\omega}^{2})\geq  F^{i\overline{i}}\Big(|u_{i\overline i}|^{2}+\sum_{p=1}^{n}|u_{ip}|^{2}\Big)-C_{0}K-\mathcal{R}_{\omega}K^{2}\mathcal{F}.
\end{equation*}
\end{lemma}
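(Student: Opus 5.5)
We compute $L(|\nabla u|_\omega^2)$ directly at $x_0$ in the normal coordinates already fixed, where $g_{i\bar j}=\delta_{ij}$ and $u_{i\bar j}=(\lambda_i-1)\delta_{ij}$. Writing $|\nabla u|^2_\omega=g^{p\bar q}u_pu_{\bar q}$, the Kählerity of $\omega$ gives at $x_0$
\[
(|\nabla u|_\omega^2)_{i\bar i}=\sum_p\big(u_{pi\bar i}u_{\bar p}+u_pu_{\bar p i\bar i}\big)+\sum_p\big(|u_{pi}|^2+|u_{p\bar i}|^2\big).
\]
Since $u_{p\bar i}=0$ for $p\neq i$ at $x_0$, the second sum equals $\sum_p|u_{ip}|^2+|u_{i\bar i}|^2$. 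Contracting with $F^{i\bar i}$ therefore produces exactly the good term $F^{i\bar i}\big(|u_{i\bar i}|^2+\sum_p|u_{ip}|^2\big)$ on the right-hand side of the lemma. It remains to bound the third-order terms from below.

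We commute covariant derivatives to move the $i\bar i$ derivatives inside. On a Kähler manifold $u_{\bar p i\bar i}=u_{i\bar i\bar p}$ with no curvature term, because $\bar\partial$-derivatives of $u_{\bar p}$ commute. For the other term, $u_{pi\bar i}=u_{i\bar i p}+R_{i\bar i p\bar q}u_q$, up to the sign convention, which we fix so that it matches \eqref{nonnegative 1}. Differentiating the equation $F(\omega_u)=\sqrt[k]{h}$ in the direction $e_p$ gives $F^{i\bar i}u_{i\bar i p}=(\sqrt[k]{h})_p$, using $\nabla g=0$. Hence
\[
F^{i\bar i}\sum_p\big(u_{pi\bar i}u_{\bar p}+u_pu_{\bar p i\bar i}\big)=2\,\mathrm{Re}\sum_p(\sqrt[k]{h})_pu_{\bar p}+\sum_{i,p,q}F^{i\bar i}R_{i\bar i p\bar q}u_qu_{\bar p}.
\]
The first term is bounded below by $-C_0|\nabla u|_\omega\geq -C_0K$, by \eqref{KD}.

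The only delicate point is the curvature term. We rotate the frame in the $p,q$ indices only: $\sum_{p,q}R_{i\bar ip\bar q}u_qu_{\bar p}=|\nabla u|^2R(e_i,\bar e_i,\nu,\bar\nu)$, where $\nu=\nabla^{1,0}u/|\nabla u|$ is a unit vector, so it is a genuine bisectional curvature value. By the definition of $\mathcal R_\omega$ this is at least $-\mathcal R_\omega|\nabla u|^2$, and it is $\geq 0$ when the bisectional curvature is nonnegative. Since $F^{i\bar i}>0$, summing over $i$ gives
\[
\sum_iF^{i\bar i}R_{i\bar i\nu\bar\nu}|\nabla u|^2\geq -\mathcal R_\omega|\nabla u|^2\mathcal F\geq -\mathcal R_\omega K^2\mathcal F.
\]
Combining the three pieces yields the claimed inequality. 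We expect no real obstacle beyond keeping the sign convention of the curvature commutation consistent with Lemma \ref{lower bound of L lambda1}, and checking that the contraction is bisectional rather than a full curvature tensor; the latter holds because the $i$-indices stay diagonal.
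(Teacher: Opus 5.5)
Your proposal is correct and follows essentially the same route as the paper: expand $L(|\nabla u|_\omega^2)$ to isolate the good term, use $u_{pi\bar i}=u_{i\bar i p}+R_{i\bar i p\bar q}u_q$ and $u_{\bar p i\bar i}=u_{i\bar i\bar p}$, substitute the differentiated equation $F^{i\bar i}u_{i\bar i p}=(\sqrt[k]{h})_p$, and bound the two remaining terms by $-C_0K$ and $-\mathcal{R}_\omega K^2\mathcal{F}$. Your rewriting of $\sum_{p,q}R_{i\bar i p\bar q}u_qu_{\bar p}$ as $|\nabla u|^2R(e_i,\bar e_i,\nu,\bar\nu)$ with $\nu$ a unit vector makes explicit a step the paper leaves implicit. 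Your commutation sign also agrees with the standard convention, in which bisectional curvature is positive on projective space.
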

\begin{proof}
A direct calculation yields that
\begin{equation*} 
L(|\nabla u|_{\omega}^{2})=   F^{i\overline{i}}\Big(|u_{i\overline i}|^{2}+\sum_{p=1}^{n}|u_{ip}|^{2}\Big)+\sum_{p=1}^{n}F^{i\overline i}(u_{pi\overline i}u_{\overline p}+u_{\overline pi\overline i}u_{p})\,.
\end{equation*}
Therefore, we are left to show
\begin{equation}\label{reduced}
\sum_{p=1}^{n}F^{i\overline i}(u_{pi\overline i}u_{\overline p}+u_{\overline pi\overline i}u_{p})\geq  -C_{0}K-\mathcal{R}_{\omega}K^{2}\mathcal{F}\,.  
\end{equation}
By commuting the covariant derivatives,
\[
u_{pi\overline i}=u_{i\overline ip}+\sum_{q=1}^{n}u_{q}R_{i\overline ip\overline{q}}\,, \qquad u_{\overline pi\overline i}=u_{i\overline i\overline p}\,.
\]
Applying $e_{p}$ and $e_{\overline p}$ to the equation \eqref{complex Hessian equation 1}, it follows that
\begin{equation*}
F^{i\overline i}u_{i\overline{i}p}=(\sqrt[k]{h})_{p}\,, \qquad F^{i\overline i}u_{i\overline{i}\overline p}=(\sqrt[k]{h})_{\overline p}\,.
\end{equation*}
Thus, we infer that
\begin{equation}\label{nonnegative 2}
\begin{split}
\sum_{p=1}^{n}F^{i\overline i}(u_{pi\overline i}u_{\overline p}+u_{\overline pi\overline i}u_{p})
&= 2\sum_{p=1}^{n}\mathrm{Re}\Big((\sqrt[k]{h})_{p}u_{\overline{p}} \Big)+F^{i\overline{i}}\sum_{p,q=1}^{n}u_{\overline{p}}u_{q}R_{i\overline ip\overline{q}}\\
&\geq  -C_{0}K-\mathcal{R}_{\omega}K^{2}\mathcal{F}\,.
\end{split}
\end{equation}
This establishes the desired inequality \eqref{reduced}, thereby concluding the proof of the lemma.
\end{proof}

We will utilize the aforementioned computations to prove Proposition \ref{lower bound of L Q}.

\begin{proof}[Proof of Proposition \ref{lower bound of L Q}]
By combining \eqref{L Q}, Lemma \ref{lower bound of L lambda1}, and Lemma \ref{lower bound of L de u}, we derive
\begin{equation*}
\begin{split}
0\geq  {} &-\frac{1}{\sqrt{\Lambda\lambda_{1}}}F^{i\overline{j},k\overline{l}}u_{i\overline{j}1} u_{k\overline{l}\overline{1}}
-F^{i\overline{i}}\frac{|(\lambda_{1})_{i}|^{2}}{2\sqrt{\Lambda\lambda_{1}}\lambda_{1}}-C_{0}\Big(\frac{1}{K}+\frac{\mathcal{F}}{\lambda_{1}}\Big)\\
& +\varphi'F^{i\overline{i}}\Big(|u_{i\overline i}|^{2}+\sum_{p=1}^{n}|u_{ip}|^{2}\Big)
+\varphi'' F^{i\overline{i}}|(|\nabla u|_{\omega}^{2})_{i}|^{2}\\
& +\psi'L(u)+\psi'' F^{i\overline{i}}|u_{i}|^{2}
-\mathcal{R}_{\omega}\Big(\sqrt{\frac{\lambda_{1}}{\Lambda}}+\varphi'K^{2}\Big)\mathcal{F}\,.
\end{split}
\end{equation*}
The first and second terms are  $G$ and $B$ respectively. 
It follows from \eqref{xieta} and \eqref{bei shu} that
\[
\varphi'K^{2}\leq  \frac{1}{\beta}\,, \qquad \sqrt{\frac{\lambda_{1}}{\Lambda}}\leq  1\,.
\]
Combining the above inequalities, we obtain Proposition \ref{lower bound of L Q}.
\end{proof}

We now prove Theorem \ref{main thm} for the case $\mathcal{R}_{\omega} > 0$ by considering two cases:\medskip

\noindent {\bf Case 1.} Assume that $\lambda_{n}<-\delta \lambda_{1}$ for some uniform constant $\delta\in (0,\frac{1}{4}] $ to be determined soon. It follows from \eqref{5..10} and the Cauchy-Schwarz inequality that
\begin{equation*}
\begin{split}
B = {} & -F^{i\overline{i}}\frac{|(\lambda_{1})_{i}|^{2}}{2\sqrt{\Lambda\lambda_{1}}\lambda_{1}} 
\geq    - \sqrt{\frac{\Lambda}{\lambda_{1}}}(\psi')^{2}F^{i\overline i}|u_{i}|^{2}- \sqrt{\frac{\Lambda}{\lambda_{1}}} (\varphi')^{2}F^{i\overline{i}}|(|\nabla u|_{\omega}^{2})_{i}|^{2}.
\end{split}
\end{equation*}
Substituting this into (\ref{prop}), and using   \eqref{xieta} and \eqref{varphi}, we obtain
\begin{equation*}
\begin{split}
& C_{0}\Big(\frac{1}{K}+\frac{\mathcal{F}}{\lambda_{1}}\Big)+\frac{\beta+1}{\beta}\mathcal{R}_{\omega}\mathcal{F} \\
& \geq 
 \varphi'F^{i\overline{i}}\Big(|u_{i\overline i}|^{2}+\sum_{p=1}^{n}|u_{ip}|^{2}\Big) +\Big(\varphi''- \sqrt{\frac{\Lambda}{\lambda_{1}}} (\varphi')^{2} \Big) F^{i\overline{i}}|(|\nabla u|_{\omega}^{2})_{i}|^{2} \\
&\quad +\psi'L(u)+\Big( \psi''- \sqrt{\frac{\Lambda}{\lambda_{1}}}(\psi')^{2} \Big) F^{i\overline{i}}|u_{i}|^{2}\\
&\geq  \frac{\lambda_{n}^{2}}{4nK^{2}}\mathcal{F}+
\psi'L(u)\,.
\end{split}
\end{equation*}
where in the last inequality we have utilized the fact that the second and fourth terms are positive. For the last term, due to the homogeneity of $F$, we have
\begin{equation}\label{upper of Lu} 
L(u) = \sum_{i=1}^{n}F^{i\overline{i}}(\lambda_{i}-1)
=\sqrt[k]{h}-\mathcal{F}\leq   \sqrt[k]{h}
\end{equation}
and thus
\begin{equation*} 
\frac{\lambda_{n}^{2}}{4nK^{2}}\mathcal{F}\leq  -\psi'\sqrt[k]{h}+C_{0}\Big(\frac{1}{K}+\frac{\mathcal{F}}{\lambda_{1}}\Big)+\frac{\beta+1}{\beta}\mathcal{R}_{\omega}\mathcal{F}\,.
\end{equation*}
Coupling with \eqref{F dayu 1}, it follows that
\begin{equation*}
\lambda_{1}\leq  \frac{1}{\delta}(-\lambda_{n})\leq  C_{0}K\,.
\end{equation*}
This ends the proof of Case 1.
\medskip

\noindent {\bf Case 2.} Assume that $\lambda_{n}\geq -\delta \lambda_{1}$.
Let us define an index set
\begin{equation*}
I=\Big\{2\leq  i\leq  n:
F^{i\overline{i}}>\delta^{-1} F^{1\overline{1}} \ \,  \text{at $x_{0}$}\Big\}\,.
\end{equation*}
Now we decompose the bad term $B$ into two terms based on $I$:
\begin{equation*}
\begin{split}
B &  = -\sum_{i\not\in I}F^{i\overline{i}}\frac{|(\lambda_{1})_{i}|^{2}}{2\sqrt{\Lambda\lambda_{1}}\lambda_{1}}
-\sum_{i\in I}F^{i\overline{i}}\frac{|(\lambda_{1})_{i}|^{2}}{2\sqrt{\Lambda\lambda_{1}}\lambda_{1}} =: B_{1}+B_{2}\,.
\end{split}
\end{equation*}

We first deal with the term $B_{1}$. Using \eqref{5..10} along with the Cauchy-Schwarz inequality, it follows that
\begin{equation*}
\begin{split}
B_{1}&=-\frac{1}{2}\sqrt{\frac{\Lambda}{\lambda_{1}}}\sum_{i\not\in I}F^{i\overline{i}}\Big|\varphi'(|\nabla u|_{\omega}^{2})_{i}+\psi'u_{i}\Big|^{2}\\
&\geq  - \sqrt{\frac{\Lambda}{\lambda_{1}}}(\varphi')^{2}\sum_{i\not\in I}F^{i\overline{i}}|(|\nabla u|_{\omega}^{2})_{i}|^{2}- \sqrt{\frac{\Lambda}{\lambda_{1}}}(\psi')^{2}\sum_{i\not\in I}F^{i\overline{i}}|u_{i}|^{2}\\
&\geq  -\varphi''\sum_{i\not\in I}F^{i\overline{i}}|(|\nabla u|_{\omega}^{2})_{i}|^{2}-\psi''\sum_{i\not\in I}F^{i\overline{i}}|u_{i}|^{2}\,,
\end{split}
\end{equation*}
where we used \eqref{xieta} in the last line.
We then have the following inequality:
\begin{equation}\label{no B1}
\begin{split}
C_{0}\Big(\frac{1}{K}+\frac{\mathcal{F}}{\lambda_{1}}\Big)+\frac{\beta+1}{\beta}\mathcal{R}_{\omega}\mathcal{F}
& \geq 
G+B_{2} + \varphi'F^{i\overline{i}}\Big(|u_{i\overline i}|^{2}+\sum_{p=1}^{n}|u_{ip}|^{2}\Big)  \\[1mm]
& \qquad
+\varphi'' \sum_{i\in I} F^{i\overline{i}}|(|\nabla u|_{\omega}^{2})_{i}|^{2}+
\psi'L(u)+\psi'' \sum_{i\in I}F^{i\overline{i}}|u_{i}|^{2}\,.
\end{split}
\end{equation}
For the fourth term, utilizing the elementary inequality
\[
(a+b)^{2}\geq  \delta a^{2}-\frac{\delta}{1-\delta}b^{2} 
\]
where $\delta$ is sufficiently small. Furthermore, in conjunction with \eqref{bei shu}, we obtain
\begin{equation*}
\begin{split}
\varphi'' \sum_{i\in I} F^{i\overline{i}}|(|\nabla u|_{\omega}^{2})_{i}|^{2}&= \beta \sum_{i\in I} F^{i\overline{i}}\Big|\frac{(\lambda_{1})_{i}}{\sqrt{\Lambda\lambda_{1}}}+\psi'u_{i}\Big|^{2}\\
&\geq  \beta \delta \sum_{i\in I}F^{i\overline{i}}\frac{|(\lambda_{1})_{i}|^{2}}{ \Lambda\lambda_{1}} -\frac{\beta \delta}{1-\delta}(\psi')^{2} \sum_{i\in I}F^{i\overline{i}}|u_{i}|^{2}\\
&\geq  2\delta \sum_{i\in I}F^{i\overline{i}}\frac{|(\lambda_{1})_{i}|^{2}}{2\sqrt{\Lambda\lambda_{1}}\lambda_{1}} -\frac{\beta \delta}{1-\delta}(\psi')^{2} \sum_{i\in I}F^{i\overline{i}}|u_{i}|^{2}.
\end{split}
\end{equation*}
Inserting it into \eqref{no B1}, we arrive at
\begin{equation*}
\begin{split}
&C_{0}\Big(\frac{1}{K}+\frac{\mathcal{F}}{\lambda_{1}}\Big)+\frac{\beta+1}{\beta}\mathcal{R}_{\omega}\mathcal{F}\\
& \geq 
-\frac{1}{\sqrt{\Lambda\lambda_{1}}} F^{i\overline{j},k\overline{l}}u_{i\overline{j}1} u_{k\overline{l}\overline{1}} + \varphi'F^{i\overline{i}}\Big(|u_{i\overline i}|^{2}+\sum_{p=1}^{n}|u_{ip}|^{2}\Big)+
\psi'L(u)  \\
& \quad
-(1-2\delta)\sum_{i\in I}F^{i\overline{i}}\frac{|(\lambda_{1})_{i}|^{2}}{2\sqrt{\Lambda\lambda_{1}}\lambda_{1}}+\Big(\psi''-\frac{\beta \delta}{1-\delta}(\psi')^{2}\Big)\sum_{i\in I}F^{i\overline{i}}|u_{i}|^{2}\,.
\end{split}
\end{equation*}
Shrink  $\delta\leq \frac{1}{4}$, if necessary,  to ensure that 
\[
\psi''-\frac{\beta \delta}{1-\delta}(\psi')^{2}= \beta\Big(1- \frac{ \delta}{1-  \delta}\Big)(\psi')^{2}\geq  0\,.
\]
Due to the inequality (121) from \cite{Sze} or inequality (2.40) in \cite{Hou-Ma-Wu}, we can also conclude that
\begin{equation}\label{hmws}
-\frac{1}{\sqrt{\Lambda\lambda_{1}}} F^{i\overline{j},k\overline{l}}u_{i\overline{j}1} u_{k\overline{l}\overline{1}}\geq  (1-2\delta)\sum_{i\in I}F^{i\overline{i}}\frac{|(\lambda_{1})_{i}|^{2}}{\sqrt{\Lambda\lambda_{1}}\lambda_{1}}\,.
\end{equation}
For the sake of completeness, we include a proof here. Let us express $F(\omega_{u})=f(\lambda_1,\cdots,\lambda_n)$ as a symmetric function of the eigenvalues of $\omega_{u}$ with respect to $\omega$. Then at the point $x_{0}$, the first and second derivatives of $F$  are given by (see e.g. \cite{Andrews94,Gerhardt,Spruck})
\begin{equation}\label{second derive of F}
\begin{split}
F^{i\overline{j}} &= \delta_{ij}f_{i}\,;\\
F^{i\overline{k},j\overline{l}}&=f_{ij}\delta_{ik}\delta_{jl}
+\frac{f_{i}-f_{j}}{\lambda_{i}-\lambda_{j}}
(1-\delta_{ij})\delta_{il}\delta_{jk},
\end{split}
\end{equation}
where the quotient is interpreted as a limit when $\lambda_{i}=\lambda_{j}$. By the concavity of $F$, we have
\[
\frac{F^{1\overline{1}}-F^{i\overline{i}}}{\lambda_{1}-\lambda_{i}}\leq 0\,, \quad  \textrm{ for  all } i=2,\cdots,n\,.
\]
It follows from \eqref{second derive of F} and definition of $I$ that
\[
-F^{i\overline{j},k\overline{l}}u_{i\overline{j}1} u_{k\overline{l}\overline{1}}\geq -\sum_{i\in I}\frac{F^{1\overline{1}}-F^{i\overline{i}}}{\lambda_{1}-\lambda_{i}}|(\lambda_1)_{i}|^2\geq (1-\delta)\sum_{i\in I}\frac{F^{i\overline{i}}}{\lambda_{1}-\lambda_{i}}|(\lambda_1)_{i}|^2\,.
\]
Thus, it suffices to show that
\[
\frac{1-2\delta}{\lambda_1}\geq \frac{1-\delta}{\lambda_1-\lambda_i}\,, \quad \textrm{ for  all } i\in I\,,
\]
or equivalently, 
\begin{equation}\label{hmws 2}
\lambda_1\geq (2-\delta^{-1})\lambda_i\,,  \quad \textrm{ for  all } i\in I\,.
\end{equation}
Fix an arbitrary $i\in I$, we now verify \eqref{hmws 2} in two cases:
\begin{itemize}
\item[(i)] If $\lambda_i\geq 0$, then 
\[
\lambda_1-(2-\delta^{-1})\lambda_i=\lambda_1-\lambda_{i}+(\delta^{-1}-1)\lambda_i\geq 0\,.
\]
\item[(ii)] If $\lambda_i\leq 0$, then
\[
\lambda_1\geq -\delta^{-1}\lambda_n\geq -\delta^{-1}\lambda_i\geq (2-\delta^{-1})\lambda_i\,.
\]
Here, we utilize the fact that $\lambda_n$ is the smallest eigenvalue of $\omega_u$ with respect to $\omega$.
\end{itemize}
This completes the proof of \eqref{hmws 2} and hence proves the desired inequality \eqref{hmws}.

Therefore, we infer from  \eqref{upper of Lu}  and \eqref{hmws} that
\begin{equation*}
\begin{split}
&C_{0}\Big(\frac{1}{K}+\frac{\mathcal{F}}{\lambda_{1}}\Big)-\psi'\sqrt[k]{h} \geq  \Big(-\psi'-\frac{\beta+1}{\beta}\mathcal{R}_{\omega}\Big)\mathcal{F}+ \varphi'F^{i\overline{i}}\Big(|u_{i\overline i}|^{2}+\sum_{p=1}^{n}|u_{ip}|^{2}\Big)\,.
\end{split}
\end{equation*}
It can be deduced from \eqref{varphi} that 
\begin{equation}\label{split}
\begin{split}
&C_{0}\Big(\frac{1}{K}+\frac{\mathcal{F}}{\lambda_{1}}\Big)+\frac{\sqrt[k]{h}}{\beta\mathrm{osc}_{M}u} \\
&\geq  \frac{1}{2\beta}\Big(\frac{1}{ \mathrm{osc}_{M} u} -2(\beta+1)\mathcal{R}_{\omega}\Big)\mathcal{F}+ \varphi'F^{i\overline{i}}\Big(|u_{i\overline i}|^{2}+\sum_{p=1}^{n}|u_{ip}|^{2}\Big)\,.
\end{split}
\end{equation}
Under the assumption that $\mathcal{R}_{\omega}\mathrm{osc}_{M}u<\frac{1}{2(\beta+1) }$, there exists a small uniform  constant $\hat{\beta}>0$ such that
\[
\mathcal{R}_{\omega}\mathrm{osc}_{M}u\leq  \frac{1}{2(\beta+1) }-\frac{3\beta\hat{\beta}}{\beta+1}\,.
\]
It then follows that
\begin{equation}\label{eps}
\frac{1}{2\beta}\Big(\frac{1}{ \mathrm{osc}_{M} u} -2(\beta+1)\mathcal{R}_{\omega}\Big)\geq \frac{3\hat{\beta}}{\mathrm{osc}_{M}u}\,.
\end{equation}
Now we can increase $D\geq   \hat{\beta}^{-1}C_{0}\,\mathrm{osc}_{M}u$  to a sufficiently large constant to guarantee that
\begin{equation}\label{eps 1}
\frac{\hat{\beta}}{\mathrm{osc}_{M}u}\mathcal{F}\geq  \frac{\hat{\beta}}{\mathrm{osc}_{M}u}\geq  \frac{C_0}{K},
\end{equation}
where we have used \eqref{F dayu 1}. Meanwhile, without loss of generality, we may assume that
\[
\lambda_{1}\geq  \hat{\beta}^{-1}C_{0}\,\mathrm{osc}_{M}u.
\]
If this is not the case, our work is already complete. Under this assumption, we derive the following inequality
\begin{equation}\label{eps 2}
\frac{\hat{\beta}}{\mathrm{osc}_{M}u}\mathcal{F}\geq  \frac{C_0}{\lambda_{1}}\mathcal{F}.
\end{equation}
By substituting \eqref{eps}, \eqref{eps 1}, and \eqref{eps 2} into \eqref{split},  we obtain the following inequality
\begin{equation}\label{hmw type}
\begin{split}
\frac{\sqrt[k]{h}}{\beta\mathrm{osc}_{M}u}
& \geq 
\varphi'F^{i\overline{i}} \Big(|u_{i\overline i}|^{2}+\sum_{p=1}^{n}|u_{ip}|^{2}\Big)+
\frac{\hat{\beta}}{\mathrm{osc}_{M}u}\mathcal{F}\\
& \geq 
\varphi'F^{1\overline{1}} \sum_{i=1}^{n}\Big(|u_{i\overline i}|^{2}+\sum_{p=1}^{n}|u_{ip}|^{2}\Big)+
\frac{\hat{\beta}}{\mathrm{osc}_{M}u}\mathcal{F}\,.\\
\end{split}
\end{equation}
Observe that the preceding inequality informs us that
\[
F^{i\overline{i}}\leq  \mathcal{F}\leq  \frac{\sqrt[k]{h}}{\beta\hat{\beta}}, \qquad \forall \, i=1,\cdots, n.
\]
It then follows from Lemma 2.2 in \cite{Hou-Ma-Wu} that
\[
F^{1\overline 1}\geq  C_{0}\Big(\frac{\beta\hat{\beta}}{\sqrt[k]{h}}\Big)^{k-1}.
\]
Consequently, we infer from inequality \eqref{hmw type} that
\begin{equation*}
\frac{1}{K^2} \sum_{i=1}^{n}\Big(|u_{i\overline i}|^{2}+\sum_{p=1}^{n}|u_{ip}|^{2}\Big)\leq  C_{0} \frac{\sup_{M}h}{\beta^{k}\hat{\beta}^{k-1}\mathrm{osc}_{M}u}\,.
\end{equation*}
This proves the desired inequality \eqref{goal}. \qed

\begin{remark}
According to the comprehensive proof of Theorem \ref{main thm}, it is possible to extend the analogous result to a broader class of fully nonlinear elliptic equations on compact K\"ahler manifolds. We hope to  tackle this problem  in a separate discussion at a later time.
\end{remark}

\subsection{Proof of Theorem \ref{main thm} when  $\mathcal{R}_{\omega}=0$}
In this case, the holomorphic bisectional curvature of $(M,\omega)$ is nonnegative.  As previously noted, Hou \cite{Hou} derived a priori $C^1$ estimates for complex Hessian equation \eqref{complex Hessian equation 1} by assuming nonnegative holomorphic bisectional curvature and adapting an approach of Blocki \cite{Blo07}, which involves applying the maximum principle to an auxiliary function of the form 
\[Q=\log |\nabla u|_{\omega}^2 + \psi(u).\] 
In contrast, the present work employs an argument based on interpolation theory. Nevertheless, under the same curvature assumption as in \cite{Hou}, our method also yields the $C^1$ estimates, thereby generalizing and encompassing Hou’s result. We now provide some details of the argument.

First, recall an inequality from the proof of Lemma \ref{lower bound of L lambda1}. Under the assumption of nonnegative holomorphic bisectional curvature, inequality \eqref{nonnegative 1} can be replaced by
\[
\begin{split}
u_{1\overline{1}i\overline i}
&= u_{i\overline i1\overline{1}} + (u_{1\overline{1}} - u_{i\overline i})R_{1\overline 1i\overline{i}} \\
&= u_{i\overline i1\overline{1}} + (\lambda_{1} - \lambda_{i})R_{1\overline 1i\overline{i}} \geq u_{i\overline i1\overline{1}}.
\end{split}
\]
Consequently, at $x_0$ we obtain
\begin{equation*}\label{R1}
L(\lambda_{1}) \geq -F^{i\overline{j},k\overline{l}}u_{i\overline{j}1} u_{k\overline{l}\overline{1}} - C_{0}\mathcal{F}.
\end{equation*}

Second, in the proof of Lemma \ref{lower bound of L de u}, the curvature condition also allows us to replace \eqref{nonnegative 2} with
\begin{equation*}
\begin{split}
\sum_{p=1}^{n}F^{i\overline i}\Big(u_{pi\overline i}u_{\overline p} + u_{\overline pi\overline i}u_{p}\Big)
&= 2\sum_{p=1}^{n}\mathrm{Re}\Big((\sqrt[k]{h})_{p}u_{\overline{p}} \Big) + F^{i\overline{i}}\sum_{p,q=1}^{n}u_{\overline{p}}u_{q}R_{i\overline ip\overline{q}} \geq -C_{0}K.
\end{split}
\end{equation*}
It follows that at $x_{0}$,
\begin{equation*}\label{R2}
L(|\nabla u|_{\omega}^{2}) \geq F^{i\overline{i}}\Big(|u_{i\overline i}|^{2} + \sum_{p=1}^{n}|u_{ip}|^{2}\Big) - C_{0}K.
\end{equation*}

Finally, these inequalities already imply the inequality \eqref{prop} can be replaced by
\begin{equation*}\label{prop new}
\begin{split}
C_{0}\Big(\frac{1}{K} + \frac{\mathcal{F}}{\lambda_{1}}\Big)
\geq G &+ B + \varphi'F^{i\overline{i}}\Big(|u_{i\overline i}|^{2} + \sum_{p=1}^{n}|u_{ip}|^{2}\Big) \\
& + \varphi'' F^{i\overline{i}}|(|\nabla u|_{\omega}^{2})_{i}|^{2} + \psi'L(u) + \psi'' F^{i\overline{i}}|u_{i}|^{2}\,,
\end{split}
\end{equation*}
which shows that we may take $\mathcal{R}_{\omega} = 0$ in Proposition \ref{lower bound of L Q}. The remainder proof of the desired inequality \eqref{lambda and nabla} then proceeds almost identically to the argument in Subsection \ref{Continued proof of Theorem} above by replacing $\mathcal{R}_{\omega}$ with zero, and we omit the details here.

\section{Proof of Theorem \ref{h closes 1}}\label{h closes 1 section}
For the proof, we begin with a stability theorem due to Guo, Phong, and Tong \cite{GPT23}.
\begin{theorem}\cite[Theorem 1]{GPT23}
Let $u$ and $\hat{u}$ be solutions of the following complex Hessian equations
\begin{equation*} 
\sigma_{k}(\omega_{u})=h\,, \qquad \sigma_{k}(\omega_{\hat{u}})=\hat{h}
\end{equation*}
 on a compact K\"ahler manifold $M$ with $\sup_{M}(u-\hat{u})=\sup_{M}(\hat{u}-u)$, where $h$ and $\hat{h}$ are two given non-negative functions on $M$ with 
\begin{equation*}
\int_{M}h\omega^{n}=\int_{M}\hat{h}\omega^{n}=\int_{M}\omega^{n}\,, \qquad \|h\|_{L^{q}}+\|\hat{h}\|_{L^{q}}\leq  Q 
\end{equation*}
for some constants $q>\frac{n}{k}$ and $Q$. Then, there exists a constant $\hat{C}$ depends on $n$, $k$, $\omega$, $Q$, and $q$ such that
\begin{equation}\label{stability}
\|u-\hat{u}\|_{L^{\infty}}\leq  \hat{C}\|h-\hat{h}\|_{L^{1}}^{\hat{\delta} }\,, \qquad \hat{\delta} :=\frac{1}{(n+1)\frac{q-1}{q-\frac{n}{k}}+3}\,.
\end{equation}
\end{theorem}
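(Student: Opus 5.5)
The plan is to follow the pluripotential route of Ko\l{}odziej and Dinew--Ko\l{}odziej. The idea is to convert $L^1$ closeness of the right-hand sides into $L^1$ (and energy) closeness of the potentials, and then upgrade this to $L^\infty$ closeness by a capacity--volume iteration, which is where the hypothesis $q>\frac{n}{k}$ enters.

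\emph{Step 1 (a priori bounds).} By Ko\l{}odziej-type $L^\infty$ estimates for $k$-admissible solutions with $\|h\|_{L^q}\le Q$, $q>\frac nk$, both $\mathrm{osc}_M u$ and $\mathrm{osc}_M\hat u$ are bounded by $C(n,k,\omega,Q,q)$. The normalization $\sup_M(u-\hat u)=\sup_M(\hat u-u)$ then gives $\|u-\hat u\|_{L^\infty}=\tfrac12\mathrm{osc}_M(u-\hat u)\le C$.

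\emph{Step 2 (energy identity).} Set $w=u-\hat u$ and $T=\sum_{j=0}^{k-1}\omega_u^{j}\wedge\omega_{\hat u}^{k-1-j}\wedge\omega^{n-k}$, which is a positive form since both potentials are $k$-admissible. Integrating by parts,
\[
\int_M \sqrt{-1}\,\partial w\wedge\overline\partial w\wedge T=\int_M (-w)(h-\hat h)\,\omega^n\le C\|h-\hat h\|_{L^1}.
\]
The $j=0$ (or $j=k-1$) term of $T$ dominates the $\omega^{n-1}$-energy only on the good part. So instead I would use this to control $\int_M|w-\bar w|\,\omega^n$ through a Poincar\'e-type argument combined with the uniform bounds from Step 1. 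This yields
\[
\|w-c\|_{L^1}\le C\|h-\hat h\|_{L^1}^{\gamma_1}
\]
for some explicit $\gamma_1>0$ and a suitable constant $c$. The normalization pins $c$ down, up to the quantity being estimated.

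\emph{Step 3 (capacity iteration).} For $s>0$ and $0<t<1$, the comparison principle for $\sigma_k$ (B\l{}ocki) gives
\[
t^{k}\,\mathrm{Cap}_k(\{u<\hat u-s-t\})\le C\int_{\{u<\hat u-s\}} h\,\omega^n .
\]
H\"older's inequality together with the volume--capacity inequality, valid exactly because $q>\frac nk$, bounds the right side by $C\,\mathrm{Cap}_k(\{u<\hat u-s\})^{1+a}$ with $a>0$ depending on $q,n,k$. The standard De Giorgi--Ko\l{}odziej lemma then gives
\[
\sup_M(\hat u-u)\le s_0+C\,\mathrm{Cap}_k(\{u<\hat u-s_0\})^{a/k}.
\]
Choosing $s_0$ a small power of $\|w\|_{L^1}$ and using Chebyshev to bound the capacity of the starting level set by the $L^1$ norm from Step 2 gives a power bound. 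Here one needs the comparison $\mathrm{Cap}\lesssim$ volume up to exponent loss, for sublevel sets of bounded admissible functions. The symmetric argument with $u,\hat u$ exchanged, together with the normalization, yields \eqref{stability}, and tracking the exponents produces $\hat\delta$.

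The main obstacle is Step 2. The energy identity only controls gradients weighted by mixed Hessian forms, not $\omega^{n-1}$ directly. Converting it into a genuine $L^1$ bound on $w$ with a power loss, independent of any lower bound on $h$, is delicate. I would first try the interpolation trick of replacing $\hat u$ by $(1-\epsilon)\hat u$, which makes $\omega_{(1-\epsilon)\hat u}\ge\epsilon\,\omega$ in the $\Gamma_k$ sense. Optimizing in $\epsilon$ afterwards is presumably what produces the factor $(n+1)\frac{q-1}{q-n/k}+3$ in the denominator of $\hat\delta$.
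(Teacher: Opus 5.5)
The paper gives no proof of this statement. It is quoted from Guo--Phong--Tong, whose argument is designed to avoid pluripotential theory; it is built, as far as I recall, on the auxiliary Monge--Amp\`ere equation technique of their $L^\infty$ estimates, and the particular exponent $\hat\delta$ comes out of that argument. Your capacity route is the classical alternative, but it has a genuine gap exactly where you suspect, in Step~2, and the fix you propose does not close it.

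Take $v=(1-\epsilon)\hat u$ and $w_\epsilon=u-v$. The $j=0$ term does give $T_\epsilon\ge\epsilon^{k-1}\omega^{n-1}$. But the right-hand side of the energy identity becomes $\int_M(-w_\epsilon)\big(h\,\omega^n-\omega_v^k\wedge\omega^{n-k}\big)$. The error measure $\omega_v^k\wedge\omega^{n-k}-\hat h\,\omega^n$ has total variation of order $\epsilon$: it contains $k\epsilon(1-\epsilon)^{k-1}\omega\wedge\omega_{\hat u}^{k-1}\wedge\omega^{n-k}$, of mass $k\epsilon(1-\epsilon)^{k-1}\int_M\omega^n$. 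Pairing it with $w_\epsilon$, of which you only know $\mathrm{osc}_Mw_\epsilon\le C$, costs $C\epsilon$. Hence
\[
\int_M\sqrt{-1}\,\partial w_\epsilon\wedge\overline\partial w_\epsilon\wedge\omega^{n-1}\le C\epsilon^{1-k}\big(\|h-\hat h\|_{L^1}+\epsilon\big),
\]
which is never small when $k\ge2$, whatever $\epsilon$ you choose.

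The missing ingredient is B\l{}ocki's inequality in its Hessian form. Put $S=\omega_u+\omega_{\hat u}$ and replace the $\omega$'s in the $k-1$ free slots one at a time by $\tfrac12S-\tfrac12\sqrt{-1}\partial\overline\partial(u+\hat u)$. Integrate the $\sqrt{-1}\partial\overline\partial(u+\hat u)$ term by parts onto $\sqrt{-1}\partial\overline\partial w=\omega_u-\omega_{\hat u}$. Then apply Cauchy--Schwarz to the Hermitian forms $(f,g)\mapsto\int_M\sqrt{-1}\,\partial f\wedge\overline\partial g\wedge\alpha_1\wedge\cdots\wedge\alpha_{k-1}\wedge\omega^{n-k}$ with $\alpha_i\in\{\omega,\omega_u,\omega_{\hat u}\}$, which are semipositive by G\aa rding. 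Each step halves the exponent:
\[
\int_M\sqrt{-1}\,\partial w\wedge\overline\partial w\wedge\omega^{n-1}\le C\Big(\int_M\sqrt{-1}\,\partial w\wedge\overline\partial w\wedge T\Big)^{2^{1-k}}\le C\|h-\hat h\|_{L^1}^{2^{1-k}}.
\]
Only then does Poincar\'e give $\|w-\bar w\|_{L^1}\le C\|h-\hat h\|_{L^1}^{2^{-k}}$, with $\bar w$ the average of $w$.

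Steps~1 and~3 are standard and fine in outline. Even with Step~2 repaired this way, though, the route does not prove the statement as written. Step~3, Ko\l{}odziej's $L^1\to L^\infty$ lemma in the Dinew--Ko\l{}odziej Hessian version, gives $\sup_M(\hat u+c-u)\le C\|(\hat u+c-u)_+\|_{L^1}^{\gamma}$ for some $\gamma=\gamma(n,k,q)<1$. The final exponent is therefore $2^{-k}\gamma$, and nothing makes it $\ge\hat\delta$: for $k=n\ge3$ one has $\hat\delta=\frac{1}{n+4}>2^{-n}>2^{-n}\gamma$. Because $\|h-\hat h\|_{L^1}$ is small, a smaller exponent does not imply \eqref{stability}. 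Your assertion that tracking exponents, or optimizing in $\epsilon$, reproduces $\hat\delta$ is therefore unsupported, since a capacity iteration stacked on B\l{}ocki's $2^{-k}$ loss gives the smaller exponent $2^{-k}\gamma$.

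A smaller point: in Step~3, Chebyshev bounds volume, not capacity. The initial capacity bound should come from the comparison lemma itself plus H\"older, namely $t^k\,\mathrm{Cap}_k(\{u<\hat u-s_0-t\})\le C\|h\|_{L^q}\,|\{u<\hat u-s_0\}|^{1-1/q}$.

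Finally, the paper only uses this theorem in Theorem~\ref{h closes 1}, to make $\mathrm{osc}_Mu$ small as $\|h-1\|_{L^1}\to0$. For that purpose any positive exponent suffices. Your repaired route would therefore serve there, with a different formula for $\epsilon_0$.
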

With this theorem at our disposal, we are now in a position to complete the proof of Theorem \ref{h closes 1}.
\begin{proof}[Proof of Theorem \ref{h closes 1}]
Without loss of generality, we may assume $\mathcal{R}_{\omega}> 0$, as in the case where $\mathcal{R}_{\omega}=0$  is trivially handled. Let $\hat{u}$ be a constant function defined as $\frac{1}{2}(\sup_{M}u+\inf_{M}u)$ and set $\hat{h}=1$ in the equation  $\sigma_{k}(\omega_{\hat{u}})=\hat{h}$.

It is straightforward to verify the following relationship
\[\sup_{M}(u-\hat{u})=\sup_{M}(\hat{u}-u)\,. \]
Therefore, we infer that
\begin{equation*}
\|u-\hat{u}\|_{L^{\infty}}=\sup_{M}(u-\hat{u})=\frac{1}{2}\mathrm{osc}_{M}u\,.
\end{equation*}
It can be deduced from \eqref{stability} that
\begin{equation}\label{stability 2}
\mathrm{osc}_{M}u\leq  2\hat{C}\|h-1\|_{L^{1}}^{\hat{\delta} }\,.
\end{equation}
Here, $\hat{C}$ is a positive constant depending on $n$, $k$, $\omega$, $q$, and $\|h\|_{L^q}$. To derive \eqref{curvature} from \eqref{stability 2}, it suffices to ensure that 
\begin{equation*}
2\hat{C}\mathcal{R}_{\omega} \|h-1\|_{L^{1}}^{\hat{\delta} }< \frac{1}{2(2+\log 2)}.
\end{equation*}
In simpler terms, this translates to 
\[
 \|h-1\|_{L^{1}}<\Big(\frac{1}{4(2+\log 2)\mathcal{R}_{\omega}\hat{C}} \Big)^{(n+1)\frac{q-1}{q-\frac{n}{k}}+3} 
\]
where the right-hand side represents our desired threshold $\epsilon_{0}$. This completes our proof of Theorem \ref{h closes 1}.
\end{proof}

\section{Proof of Theorem \ref{gradient estimate}}\label{gradient section}
By substituting $u$  for $u-\inf_{M}u$, we may assume that $\inf_{M}u=0$, thereby leading to 
\[\mathrm{osc}_{M}u=\sup_{M}u=\|u\|_{L^{\infty}}.\] 
The estimate obtained in the Section 3 is well-suited for a standard interpolation argument which allows us to obtain directly a gradient estimate and consequently, also a Laplacian estimate.  The following proposition will provide a complete proof of Theorem \ref{gradient estimate}.

\begin{proposition}\label{gradient}
Suppose that there is a uniform constant $C_{0}$ such that
\[
\|u\|_{L^\infty}\leq  C_{0}\,, \qquad \|\Delta_{\omega}  u \|_{L^\infty}\leq  C_{0}\Big( \|\nabla  u \|_{L^\infty}+1 \Big)\,.
\]
Then, there exists a uniform bound $C'$ with
\begin{equation}\label{gradient estimate }
\|\nabla u\|_{L^\infty} \leq  C'\,.
\end{equation}
\end{proposition}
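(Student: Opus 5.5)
The plan is a Gagliardo--Nirenberg/Landau type interpolation between $C^0$ and $C^{1,\alpha}$, closed by a bootstrap. The idea is that $\|\nabla u\|_{L^\infty}$ can be controlled by $\|u\|_{L^\infty}^{1/2}$ times a second-order quantity to the power $1/2$. The hypothesis $\|\Delta_\omega u\|_{L^\infty}\le C_0(\|\nabla u\|_{L^\infty}+1)$ says this second-order quantity grows at most linearly in $\|\nabla u\|_{L^\infty}$. Substituting then gives an inequality of the form $N\le C(N+1)^{1/2}$ for $N=\|\nabla u\|_{L^\infty}$, which forces $N\le C'$.

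Since only $\Delta_\omega u$ is bounded, not the full Hessian, the first step is to pass to a $C^{1,\alpha}$ bound. Cover $M$ by finitely many coordinate balls $B_{2r}(x_i)$ of fixed radius, on which $\omega$ is uniformly equivalent to the Euclidean metric. The operator $\Delta_\omega$ is then a uniformly elliptic operator with smooth coefficients. Interior $W^{2,p}$ estimates for any $p<\infty$ on $B_{2r}$, together with Sobolev embedding ($p>2n$), give
\[
\|u\|_{C^{1,\alpha}(B_r)}\le C\big(\|\Delta_\omega u\|_{L^\infty(B_{2r})}+\|u\|_{L^\infty(B_{2r})}\big).
\]
The constant $C$ depends only on $(M,\omega)$, the cover and $\alpha$. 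Summing over the cover, $\|u\|_{C^{1,\alpha}(M)}\le C(\|\nabla u\|_{L^\infty}+1)$ with $C$ uniform.

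Next comes the interpolation inequality
\[
\|\nabla u\|_{L^\infty}\le C\|u\|_{L^\infty}^{\frac{\alpha}{1+\alpha}}[\nabla u]_{C^\alpha}^{\frac{1}{1+\alpha}}+C\|u\|_{L^\infty}.
\]
It is elementary: at the point $x$ where $|\nabla u|$ is maximal, move in the direction of $\nabla u$ along a segment of length $t$ in a chart. Then
\[
2\|u\|_{L^\infty}\ge t|\nabla u(x)|-Ct^{1+\alpha}[\nabla u]_{C^\alpha}-Ct^{2}|\nabla u(x)|,
\]
where the last term accounts for the non-flat metric. Optimize over $t\le r$; if the optimal $t$ exceeds $r$, take $t=r$ and obtain the lower-order term. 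Alternatively one can simply cite the standard interpolation inequalities (Gilbarg--Trudinger Lemma 6.32). Writing $N=\|\nabla u\|_{L^\infty}$ and using $\|u\|_{L^\infty}\le C_0$, this yields
\[
N\le C(N+1)^{\frac{1}{1+\alpha}}+C.
\]
Since $\frac{1}{1+\alpha}<1$, Young's inequality absorbs the first term: $N\le \tfrac12 N+C$, hence $N\le C'$. The hypothesis then also bounds $\|\Delta_\omega u\|_{L^\infty}$, which gives Theorem \ref{gradient estimate}. Indeed, the two assumptions are supplied by Theorem \ref{main thm} (or Theorem \ref{h closes 1}) and the normalization $\inf_M u=0$, with $\mathrm{osc}_M u$ bounded.

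The main obstacle is the first step. One must make sure the $C^{1,\alpha}$ (equivalently $W^{2,p}$) bound depends linearly on $\|\Delta_\omega u\|_{L^\infty}$ with no loss, and uses only $\|u\|_{L^\infty}$ as a lower-order term, with constants independent of $u$. This is exactly what the standard Calder\'on--Zygmund theory provides on a compact manifold. The bound must not involve $\|\nabla u\|_{L^\infty}$ with a coefficient $\ge 1$; otherwise the absorption fails. Note that $\|\Delta_\omega u\|_{L^\infty}$ alone does not bound $\nabla^2 u$ in $L^\infty$, which is why the argument passes through $C^{1,\alpha}$ with $\alpha<1$ rather than through $C^{1,1}$.
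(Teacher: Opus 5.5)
Your proposal is correct and follows essentially the same route as the paper. Both arguments bound $\|u\|_{C^{1,\alpha}}$ linearly by $\|u\|_{L^\infty}+\|\Delta_\omega u\|_{L^\infty}$ via $W^{2,p}$ estimates and Morrey embedding, interpolate $\|\nabla u\|_{L^\infty}$ between $C^0$ and $C^{1,\alpha}$, and absorb. The differences are cosmetic: you prove the multiplicative interpolation inequality by hand in charts, use interior $W^{2,p}$ estimates, and absorb with Young's inequality. The paper instead cites the additive $\epsilon$-form from Gilbarg--Trudinger Section 6.8, uses a global $L^q$ estimate on $M$, and takes $\epsilon$ small.
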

\begin{proof}
The proof in the hypercomplex setting has already been established by Gentili and the first-named author in \cite{GZb}.  Interpolation theory, as outlined in section 6.8 of  \cite{GT}, demonstrates that for any constant $ \epsilon>0 $ and $\alpha$ within the range of $ 0<\alpha<1 $, there is a positive constant  $ C(\epsilon,\alpha)$  such that
\[
\|\nabla u\|_{L^\infty}\leq  C(\epsilon,\alpha) \|u\|_{L^\infty}+\epsilon\|u\|_{C^{1,\alpha}}\leq  C(\epsilon,\alpha) C_{0}+\epsilon \|u\|_{C^{1,\alpha}}\,.
\]
By selecting $ q=\frac{2n}{1-\alpha}>2n $ and  applying  to Morrey's inequality along with  elliptic $ L^q $-estimates for the Laplacian, we obtain the following inequality:
\[
\|u\|_{C^{1,\alpha}}\leq  C_{1} \|u\|_{W^{2,q}}\leq  C_{2}\Big( \|u\|_{L^q}+\|\Delta_{\omega}  u\|_{L^q} \Big)\leq  C_{2}\Big(\|u\|_{L^\infty}+\|\Delta_{\omega}  u\|_{L^\infty}\Big)\,.
\]
Here, $ C_{1} $ and $C_{2}$ are positive constants that depend only on $ \alpha $.

Combining all the preceding inequalities, we conclude that
\[
\|\nabla u\|_{L^\infty}\leq  C(\epsilon,\alpha) C_0+\epsilon C_{0}C_{2}\big( 2+\|\nabla u\|_{L^\infty} \big)\,. 
\]
From this inequality, we can deduce the gradient estimate \eqref{gradient estimate } by selecting $\epsilon$ small enough to ensure that $\epsilon<(C_{0}C_{2})^{-1} $. Now, we can select 
\[ 
C'=\frac{ C_{0}}{1-\epsilon C_{2}C_{0}}\big(C(\epsilon,\alpha)+2\epsilon C_{2}\big)
\]
as a candidate for the desired upper bound of $\|\nabla u\|_{L^\infty}$. This completes the proof of proposition.
\end{proof}

\section*{Acknowledgements}
The first author was supported by the National Natural Science Foundation of China (Grant No. 12401065) and the Initial Scientific Research Fund of Young Teachers in Hefei University of Technology (Grant No. JZ2024HGQA0119). The second author was supported by the National Key R and D Program of China 2020YFA0713100 and by NSF in China, Nos. 12141104, 11801535, and 11721101. \medskip

\noindent\textbf{Data availability} {This manuscript has no associated data.}
\section*{Declarations}
\noindent\textbf{Conflict of interest} The authors state that there is no conflict of interest.

\end{document}